\documentclass[a4paper,10pt,twoside,leqno]{article}

\usepackage{amsmath,amsthm,amsfonts,latexsym,amscd,amssymb}
\usepackage{hyperref}
\hypersetup{
	colorlinks=true,
	urlcolor=black,
	citecolor=blue}

\usepackage[a4paper, total={6.1in,8.6in}]{geometry}
\numberwithin{equation}{section}
\input xypic
\def\qed{{\hbadness=10000\hfill\ \vbox{\hrule height.09ex
			\hbox{\vrule width.09ex height1.55ex depth.2ex \kern1.8ex
				\vrule width.09ex height1.55ex depth.2ex}\hrule height.09ex}\break
		\bigskip}}
\newtheorem{theorem}{Theorem}[section]
\newtheorem{lemma}[theorem]{Lemma}
\newtheorem{corollary}{Corollary}[section]
\newtheorem{proposition}{Proposition}[section]

\theoremstyle{definition}

\newtheorem{example}[theorem]{Example}

\theoremstyle{remark}
\newtheorem{remark}[theorem]{Remark}
  
\newcommand{\n}{\noindent}

\begin{document}
	
	\linespread{1}\title {\textbf{Riemannian Submersion whose Total Manifold admits an Almost Hyperbolic Ricci Soliton}}
	
	\author{Sonali Jaiswal, Akhilesh Yadav, S.S. Shukla}
	\date{}
	
	\maketitle 
	
	\noindent\textbf{Abstract:} 
	In the present paper, we investigate geometric properties of a Riemannian submersion for which the total manifold is endowed with hyperbolic and almost hyperbolic Ricci solitons.  We establish conditions under which the fibers of the submersion admit hyperbolic and almost hyperbolic Ricci solitons. Further, we derive necessary conditions for the base manifold to be an Einstein and a hyperbolic Ricci soliton. Also, we determine the scalar curvature associated with the fibers and the base manifold. Moreover, when the total space admits an almost hyperbolic Ricci soliton with a torse-forming potential vector field, we obtain conditions ensuring that the extrinsic vertical and horizontal distributions are almost generalized quasi-Einstein. We also obtain necessary conditions for the fibers and base manifold to be an Einstein and almost generalized quasi-Einstein manifold. Finally, examples are included to illustrate the existence of hyperbolic and almost hyperbolic Ricci solitons in Riemannian submersion.
	\maketitle
	
	\n\textbf {Mathematics Subject Classification (2020):} 
	53C25, 53C40, 53E20, 53E40. 
	
	\n\textbf{Key words:}
	Riemannian submersion, hyperbolic Ricci soliton,	almost hyperbolic Ricci soliton, Einstein manifold, generalized quasi-Einstein manifold.

	\section{Introduction}
	
	Geometric flows play an important role in modern differential geometry because of their connections with curvature, topology, analysis, and mathematical physics. Among various geometric flows, the Ricci flow introduced by Hamilton \cite{rs} has become a fundamental tool for studying the evolution of Riemannian metrics and curvature. Ricci solitons arise as self-similar solutions of the Ricci flow and provide important generalizations of Einstein manifolds. The geometry of Einstein manifolds and their various generalizations has been extensively studied \cite{al}. In particular, quasi-Einstein and generalized quasi-Einstein manifolds were investigated in \cite{mc,mc1}, while super quasi-Einstein manifolds were studied in \cite{co}. Several geometric properties of Ricci solitons associated with special vector fields have also been investigated. In particular, concurrent, concircular, and torqued vector fields in relation to Ricci solitons were studied in \cite{by,by1,by3}, while further properties of concircular vector fields were considered in \cite{by2}. The notion of torse-forming vector fields, which is also relevant to the present study, originates from the work of Yano \cite{ky}.
	
	Another important development in the theory of geometric flows is the hyperbolic geometric flow, which has a wave-like character in contrast to the parabolic nature of the Ricci flow. Kong and Liu investigated the wave character of metrics and hyperbolic geometric flow \cite{dk}, while Dai, Kong, and Liu established short-time existence and nonlinear stability results for hyperbolic geometric flow \cite{wr}. The hyperbolic geometric flow was further studied on Riemann surfaces by Kong, Liu, and Xu \cite{dk1}. A dissipative hyperbolic geometric flow was subsequently investigated by Dai, Kong, and Liu \cite{wr1}. Self-similar solutions associated with hyperbolic geometric flow, known as hyperbolic Ricci solitons, have subsequently attracted considerable attention. Three-dimensional homogeneous hyperbolic Ricci solitons were studied in \cite{hf}, while hyperbolic Ricci solitons on warped product manifolds were investigated in \cite{sa}. Further results on hyperbolic Ricci solitons were obtained in \cite{am,gf}. Hyperbolic Ricci solitons and their gradient versions have also been studied in relativistic perfect fluid spacetimes \cite{md}. More recently, almost hyperbolic Ricci solitons were investigated on $(LCS)_n$-manifolds \cite{mj}.
	
	Riemannian submersions provide a natural framework for relating the geometry of a total manifold with that of its base and fibers. The fundamental equations of Riemannian submersions were established by O'Neill \cite{bo}, and the theory has been developed extensively in \cite{mf,bs}. The interaction between Riemannian submersions and Ricci solitons has been investigated by Meri\c{c} and Kili\c{c}, who obtained conditions under which the fibers and the base manifold inherit Ricci soliton or almost Ricci soliton structures \cite{em}. They also studied Riemannian submersions whose total spaces are endowed with torse-forming vector fields and obtained further geometric characterizations \cite{em1}. Various generalized soliton structures have also been studied in the context of Riemannian submersions. For example, $\eta$-Ricci-Yamabe solitons and almost $\eta$-Ricci-Bourguignon solitons were considered in \cite{ms,sk}, while conformal $\eta$-Ricci solitons on Riemannian submersions were investigated in \cite{md1}. Almost Yamabe solitons on total spaces of almost Hermitian submersions were studied in \cite{tf}. These investigations demonstrate the relevance of submersion theory in the study of generalized soliton structures.
	
	Conformal submersions provide a further generalization of Riemannian submersions and have also been studied in connection with Ricci soliton geometry. Meena and Yadav investigated conformal submersions whose total manifolds admit Ricci solitons and obtained conditions under which the fibers and the base manifold admit Ricci soliton, almost Ricci soliton, and Einstein structures \cite{km}. Further generalized Ricci-type soliton structures have been considered in different geometric settings, including $\eta$-Ricci solitons and contact geometry \cite{sd}, as well as almost conformal Ricci solitons on weakly Ricci symmetric spacetimes \cite{ts}.
	
	Motivated by these developments, we investigate several geometric properties associated with hyperbolic and almost hyperbolic Ricci soliton structures in the setting of Riemannian submersions. First, we obtain conditions under which the fibers admit hyperbolic or almost hyperbolic Ricci solitons. We also establish conditions under which the base manifold becomes Einstein or admits a hyperbolic Ricci soliton. Furthermore, we obtain conditions for the potential vector field to be conformal and determine the scalar curvature of the fibers and base manifold.
	
	Next, we consider a torse-forming potential vector field and derive further geometric consequences. Under suitable conditions, when the vertical and horizontal components of the potential vector field are concircular, we show that the corresponding fibers and base manifold become Einstein manifolds, respectively. We also establish conditions under which the extrinsic vertical and horizontal distributions admit an almost generalized quasi-Einstein structure. Finally, examples are provided to illustrate the obtained results.
	
	The paper is organized as follows. Section~2 contains the necessary preliminaries on Riemannian submersions and hyperbolic and almost hyperbolic Ricci solitons. Section~3 is devoted to the geometric properties associated with hyperbolic Ricci solitons, including results concerning the fibers, base manifold, potential vector field, and scalar curvature. In Section~4, we study almost hyperbolic Ricci solitons with torse-forming potential vector fields and obtain the corresponding Einstein and almost generalized quasi-Einstein structures. Finally, examples illustrating the obtained results are presented.

	\section{Preliminaries}
	
	This section is devoted to a brief review of the basic notions and fundamental results concerning Riemannian submersions, which are required in the subsequent sections.

	A Riemannian submersion \cite{bo}, is a smooth surjective mapping $\pi$ from an
	$m$-dimensional Riemannian manifold $(M^{m},g)$ onto an
	$n$-dimensional Riemannian manifold $(B^{n},g')$ satisfying the	following axioms:
	\begin{enumerate}
		\item[(i)]  The map $\pi$ is of maximal rank,
		\item[(ii)]  The differential map $\pi_{\ast}$ preserves the lengths of
		horizontal vectors.
 	\end{enumerate}
	
	Let $\pi:(M,g)\rightarrow(B,g')$ be a Riemannian submersion.
	If $X$ and $Y$ are basic vector fields on $M$, then the following
	properties holds \cite{bo},
	\begin{enumerate}
		\item [(i)] $g(X,Y)=g'(X',Y')\circ\pi$,
		\item [(ii)]$\mathcal{H}[X,Y]$ is a basic vector field and is $\pi$-related
		to $[X',Y']$,
		\item [(iii)] $\mathcal{H}(\nabla_{X}Y)$ is a basic vector field corresponding
		to $\nabla'_{X'}Y'$, where $\nabla'$ denotes the  Levi-Civita connection
		on $(B,g')$, 
		\item [(iv)]for any vertical vector field $V$, the Lie bracket $[X,V]$
		remains vertical.
	\end{enumerate}
	
	The fundamental $(1,2)$-tensor fields $T$ and $A$ associated with a
	Riemannian submersion on $(M,g)$ are defined by \cite{bo}
	\begin{equation*}
		T_{E}F=\mathcal{H}\nabla_{\mathcal{V}E}\mathcal{V}F+
		\mathcal{V}\nabla_{\mathcal{V}E}\mathcal{H}F,
	\end{equation*}
	\begin{equation*}
		A_{E}F=\mathcal{V}\nabla_{\mathcal{H}E}\mathcal{H}F+
		\mathcal{H}\nabla_{\mathcal{H}E}\mathcal{V}F,
	\end{equation*}
	for any vector fields $E$ and $F$ on $(M,g)$ and $\nabla$ denotes the
	Levi-Civita connection of $(M,g)$. These tensor fields are known as the
	integrability tensors of the Riemannian submersion. For
	$E\in\Gamma(TM)$, the operators $T_{E}$ and $A_{E}$ act as skew-symmetric
	endomorphisms on $\Gamma(TM)$ and interchange the vertical and
	horizontal distributions.
	
	We denote by $\mathcal{V}$ and $\mathcal{H}$ the projection morphisms
	onto the distributions $ker\pi_{\ast}$ and $(ker\pi_{\ast})^{\perp}$,
	respectively. It is straightforward to observe that $T$ is a vertical
	tensor, that is $T_{E}=T_{\mathcal{V}E}$, while $A$ is horizontal, that
	is $A_{E}=A_{\mathcal{H}E}$. Using these properties together with the
	metric compatibility condition $\nabla g=0$, we have the following
	results,
	
	\begin{lemma}$\cite{mf}$
		For any $X,Y,Z\in\Gamma(TM)$, the following relations hold,
		\begin{align*}
	g(T_{X}Y,Z)=-g(Y,T_{X}Z),\,\,\,\,\,\,\,\,\,\
			g(A_{X}Y,Z)=-g(Y,A_{X}Z).
		\end{align*}
	\end{lemma}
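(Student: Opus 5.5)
We prove the skew-symmetry of $T_X$ directly from the definitions, and the statement for $A_X$ follows by the same argument with the roles of $\mathcal{V}$ and $\mathcal{H}$ exchanged. Since $T_X=T_{\mathcal{V}X}$ and $A_X=A_{\mathcal{H}X}$, we may assume $X=U$ is vertical in the first identity and $X=E$ is horizontal in the second. Both sides are tensorial and bilinear in $Y,Z$, so we decompose $Y=\mathcal{V}Y+\mathcal{H}Y$ and $Z=\mathcal{V}Z+\mathcal{H}Z$. This leaves four cases: vertical--vertical, horizontal--horizontal, and the two mixed ones.

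Take $Y,Z$ both vertical. By definition $T_UY=\mathcal{H}\nabla_UY$ is horizontal, so $g(T_UY,Z)=0$. Likewise $T_UZ$ is horizontal, so $g(Y,T_UZ)=0$. The horizontal--horizontal case is symmetric: $T_UY=\mathcal{V}\nabla_UY$ is vertical and pairs to zero with a horizontal $Z$. Hence the only content lies in the mixed case, where $Y=V$ is vertical and $Z=H$ is horizontal.

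In that case we have $T_UV=\mathcal{H}\nabla_UV$ and $T_UH=\mathcal{V}\nabla_UH$. Therefore
\[
g(T_UV,H)=g(\nabla_UV,H), \qquad g(V,T_UH)=g(V,\nabla_UH).
\]
Metric compatibility $\nabla g=0$ gives
\[
U\big(g(V,H)\big)=g(\nabla_UV,H)+g(V,\nabla_UH).
\]
Since $g(V,H)=0$ identically, this yields $g(T_UV,H)=-g(V,T_UH)$. The reversed mixed case, with $Y$ horizontal and $Z$ vertical, is the same identity read with the roles of $Y$ and $Z$ swapped.

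For $A$, repeat the argument with $E$ horizontal. Here $A_EF=\mathcal{V}\nabla_EF$ when $F$ is horizontal and $A_EF=\mathcal{H}\nabla_EF$ when $F$ is vertical. The pure cases vanish because $A_E$ interchanges the two distributions. The mixed case follows from differentiating $g(H,V)=0$ along $E$.

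There is no real obstacle here. The only point requiring care is that $T_X$ depends only on $\mathcal{V}X$ while its argument must be decomposed fully. One must use the correct half of the definition of $T$ (respectively $A$) in each case, so that the projections $\mathcal{H}$ and $\mathcal{V}$ can be dropped inside $g(\cdot,\cdot)$ against vectors of the complementary type.
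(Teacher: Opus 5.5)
Your proof is correct and follows the same route the paper indicates. The paper gives no proof beyond the citation and the remark that the lemma follows from $T_E=T_{\mathcal{V}E}$, $A_E=A_{\mathcal{H}E}$ and $\nabla g=0$; that is exactly your reduction to the mixed vertical--horizontal case, where you differentiate $g(V,H)=0$, after the pure cases vanish because $T_U$ and $A_E$ interchange the two distributions.
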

	
	\begin{lemma}$\cite{mf}$
		For any vertical vector fields $U,W$ and horizontal vector fields
		$X,Y$, the tensor fields $T$ and $A$ satisfy
		\begin{align*}
			T_{U}W = T_{W}U,\,\,\,\,\,\,\,\,\,\,
			A_{X}Y =-A_{Y}X= \frac{1}{2}\mathcal{V}[X,Y].
		\end{align*}
	\end{lemma}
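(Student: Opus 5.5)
The plan is to work directly from the definitions of $T$ and $A$ together with the Levi-Civita properties of $\nabla$ (torsion-freeness and metric compatibility). The skew-symmetry in the previous lemma is also available. The two claims are independent, so I would treat them in turn.

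For $T_UW=T_WU$ with $U,W$ vertical, the definition gives $T_UW=\mathcal{H}\nabla_UW$, since $\mathcal{V}U=U$ and $\mathcal{H}W=0$. Likewise $T_WU=\mathcal{H}\nabla_WU$. Subtracting and using that $\nabla$ is torsion free,
\begin{equation*}
T_UW-T_WU=\mathcal{H}(\nabla_UW-\nabla_WU)=\mathcal{H}[U,W].
\end{equation*}
The vertical distribution $\ker\pi_*$ is integrable, because its leaves are the fibers $\pi^{-1}(b)$. Hence $[U,W]$ is vertical, $\mathcal{H}[U,W]=0$, and the first identity follows.

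For the horizontal claim, take $X,Y$ horizontal. Then $A_XY=\mathcal{V}\nabla_XY$, and similarly $A_YX=\mathcal{V}\nabla_YX$. Torsion-freeness gives
\begin{equation*}
A_XY-A_YX=\mathcal{V}[X,Y].
\end{equation*}
So it suffices to prove the alternation $A_XY=-A_YX$, which is equivalent to $A_XX=0$ for all horizontal $X$. This step is the main one, and I would argue it by tensoriality. Since $A$ is a tensor, it is enough to check $A_XX=0$ pointwise. We may therefore take $X$ basic, because every horizontal vector at a point extends to a basic field. For any vertical $V$, metric compatibility gives
\begin{equation*}
g(\nabla_XX,V)=-g(X,\nabla_XV)=-g(X,[X,V])-g(X,\nabla_VX).
\end{equation*}
Property (iv) says $[X,V]$ is vertical, so $g(X,[X,V])=0$. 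For the remaining term, $g(X,\nabla_VX)=\tfrac12 V(g(X,X))$. This vanishes because $g(X,X)=g'(X',X')\circ\pi$ by property (i), so it is constant along fibers. Hence $\mathcal{V}\nabla_XX=0$, that is, $A_XX=0$.

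Polarizing $A_{X+Y}(X+Y)=0$ gives $A_XY=-A_YX$. Combining this with the displayed relation yields $2A_XY=\mathcal{V}[X,Y]$, which completes the proof.
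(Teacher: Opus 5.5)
Your proof is correct and complete: the $T$-identity reduces to $\mathcal{H}[U,W]=0$ by integrability of the fibers, and the $A$-identity follows from $A_XX=0$ for basic $X$ (using property (iv) and the constancy of $g(X,X)$ along fibers), together with tensoriality and polarization. The paper gives no proof of its own and only cites the monograph, whose argument is O'Neill's original one, which is the same route you took.
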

	
	If $X$ is a basic vector field and $U$ is vertical, then
	$\mathcal{H}(\nabla_{U}X)=\mathcal{H}(\nabla_{X}U)=A_{X}U$, and the Lie
	bracket $[U,X]$ is vertical. The horizontal distribution
	$\mathcal{H}$ is integrable if and only if $A=0$, and hence $A$ is called
	the integrability tensor of the Riemannian submersion. The tensor field $T$ coincides with the second fundamental form of the
	fibers. Consequently, the condition $T=0$ is equivalent to the fibers
	being totally geodesic. A Riemannian submersion is said to have totally
	geodesic fibers if $T$ vanishes identically.
	
	Let $\overline{\nabla}$ denote the Schouten connection \cite{mf}, associated with the
	mutually orthogonal horizontal and vertical distributions. For any
	$U,W\in\Gamma(\mathcal{V})$ and $X,Y\in\Gamma(\mathcal{H})$, we have
	\begin{equation}\label{2.1}
		\nabla_{U}W=T_{U}W+\overline{\nabla}_{U}W,
	\end{equation}
	\begin{equation}\label{2.2}
		\nabla_{U}X=\mathcal{H}\nabla_{U}X+T_{U}X,
	\end{equation}
	\begin{equation}\label{2.3}
		\nabla_{X}W=A_{X}W+\mathcal{V}\nabla_{X}W,
	\end{equation}
	\begin{equation}\label{2.4}
		\nabla_{X}Y=\mathcal{H}\nabla_{X}Y+A_{X}Y,
	\end{equation}
	where
	$\overline{\nabla}_{U}W=\mathcal{V}\nabla_{U}W=\widehat{\nabla}_{U}W$.
	\begin{lemma}$\cite{mf}$
		For a Riemannian submersion, the following statements hold:
		\begin{enumerate}
			\item [(i)] if the tensor $A$ is parallel, then $A$ vanishes,
			\item [(ii)] if the tensor $T$ is parallel, then $T$ vanishes.
		\end{enumerate}
	\end{lemma}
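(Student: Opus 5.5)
The plan is to use only that $T$ is vertical and $A$ is horizontal, together with the skew-symmetry of Lemma 2.1 and the symmetry and alternation properties of Lemma 2.2. In each case, parallelism applied to the "wrong" kind of argument forces the tensor to vanish along certain special directions. A norm argument then shows that the mixed part of the tensor vanishes, and skew-symmetry finishes the proof.

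\emph{Case $T$ parallel.} Fix a horizontal $X$ and arbitrary $E,F$. Since $T_X=T_{\mathcal V X}=0$, we have
\begin{equation*}
0=(\nabla_E T)_X F=\nabla_E(T_XF)-T_{\nabla_E X}F-T_X\nabla_E F=-T_{\mathcal V\nabla_E X}F .
\end{equation*}
Hence $T_{\mathcal V\nabla_E X}=0$ for all $E$ and all horizontal $X$. Take $E=U$ vertical. By (2.2) the vertical part of $\nabla_U X$ is $T_UX$, so $T_{T_UX}=0$. By Lemma 2.2, $T_V(T_UX)=T_{T_UX}V=0$ for every vertical $V$. Pairing with a horizontal $Y$ and using Lemma 2.1 gives
\begin{equation*}
0=g(T_VT_UX,Y)=-g(T_UX,T_VY).
\end{equation*}
Choosing $V=U$ and $Y=X$ yields $\|T_UX\|^2=0$, so $T_UX=0$ for all vertical $U$ and horizontal $X$. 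Lemma 2.1 then gives $g(T_UV,X)=-g(V,T_UX)=0$, so $T_UV=0$ as well. Since $T_E=T_{\mathcal V E}$, it follows that $T=0$.

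\emph{Case $A$ parallel.} The argument is the mirror image. For vertical $U$ we have $A_U=0$, so $(\nabla_EA)_UF=0$ gives $A_{\mathcal H\nabla_EU}=0$. Take $E=X$ horizontal. By (2.3) the horizontal part of $\nabla_XU$ is $A_XU$, so $A_{A_XU}=0$. Since $A_XU$ is horizontal, Lemma 2.2 gives, for every horizontal $Y$, $A_Y(A_XU)=-A_{A_XU}Y=0$. Pairing with a vertical $V$ and using Lemma 2.1 gives
\begin{equation*}
0=g(A_YA_XU,V)=-g(A_XU,A_YV).
\end{equation*}
Setting $Y=X$ and $V=U$ yields $A_XU=0$. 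Skew-symmetry then gives $A_XY=0$, and since $A_E=A_{\mathcal HE}$, we conclude $A=0$.

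The main obstacle is the step from the vanishing of $T$ or $A$ along the special directions $T_UX$, respectively $A_XU$, to full vanishing. The key idea there is to use the symmetry of $T$, respectively the alternation of $A$, from Lemma 2.2 to move the special direction into the argument slot. After that, Lemma 2.1 turns the identity into a norm, and the norm must vanish. Everything else is a direct expansion of the covariant derivative.
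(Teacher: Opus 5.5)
The paper does not prove this lemma itself; it quotes it from \cite{mf}. Your argument is correct and is essentially the standard one from that source: your expansion of the covariant derivative produces O'Neill's identities $(\nabla_U T)_X=-T_{T_UX}$ and $(\nabla_X A)_U=-A_{A_XU}$, and you then use the symmetry of $T$ (respectively the alternation of $A$) together with skew-symmetry to turn $T_{T_UX}=0$ and $A_{A_XU}=0$ into $\|T_UX\|^2=0$ and $\|A_XU\|^2=0$, from which $T\equiv 0$ and $A\equiv 0$ follow as you describe.
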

	Let $R^\ast$ denote the $(1,3)$-tensor field on $\Gamma(\mathcal{H})$
	and let $R'$ be the Riemannian curvature tensor of $(B,g')$. For any
	horizontal vector fields $X,Y,Z,J$, we have
	\begin{equation*}
		\pi_\ast (R^\ast (X,Y,Z))=R'(\pi_\ast X,\pi_\ast Y,\pi_\ast Z),
	\end{equation*}
	\begin{equation*}
		R^\ast (X,Y,Z,J)=R'(\pi_\ast X,\pi_\ast Y,\pi_\ast Z,\pi_\ast J)\circ\pi.
	\end{equation*}
	
	Further, let $\widehat{R}$ denote the Riemannian curvature tensor of a fiber. By applying the Gauss and Codazzi equations, we have
	\begin{align}\label{2.5}
		R(U,V,W,P)=&\widehat{R}(U,V,W,P)
		-g(T_V W,T_U P)+g(T_U W,T_V P),\\
		R(X,Y,Z,J)=&R^\ast (X,Y,Z,J)
		-g(A_Y Z,A_X J)-g(A_Z X, A_Y J)\\&
		+2g(A_X Y, A_Z J),\notag
	\end{align}
	for any vertical vector fields $U,V,W,P$ and horizontal vector fields $X,Y,Z,J$.	We call a local orthonormal frame	$\{X_i,U_j\}_{1\leq i\leq n,\,1\leq j\leq r}$ on $M$ a $\pi$-adapted frame,
	if each $X_i$ is horizontal and each $U_j$ is vertical. Let $N$ be the
	horizontal vector field on $M$ locally defined by
	\begin{align}\label{2.7}
		N=\sum_{j=1}^{r}T_{U_j}U_j=rH,
	\end{align}
	where $\{U_j\}_{1\leq j\leq r}$ is a local orthonormal frame of the
	vertical distribution $\mathcal{V}$ and $H$ denotes the mean curvature
	vector field of the fibers. A Riemannian submersion is said to be
	minimal, if $H=0$.
	
	A Riemannian submersion is said to have totally umbilical fibers, if
	\begin{equation}\label{2.8}
		T_{U}W=g(U,W)H,
	\end{equation}
	for all vertical vector fields $U,W$.
	
	Also, for any vector field $E\in\Gamma(TM)$ we have
	\begin{equation}\label{2.9}
		g(\nabla_E N,X)=\sum_{j=1}^{r}g((\nabla_E T)_{U_j}U_j,X).
	\end{equation}
	We denote by $\tilde{\delta}(X)$ the horizontal divergence of a vector
	field $X\in\Gamma(\mathcal{H})$, which is defined as
	\begin{align}\label{2.10}
		\widetilde{\delta}(X)=\sum_{i=1}^{n}g(\nabla_{X_i}X,X_i),
	\end{align}
	where $\{X_1,X_2,\ldots,X_n\}$ is a local orthonormal basis of the
	horizontal distribution. Consequently, we have
	\begin{equation}\label{2.11}
		\widetilde{\delta}(N)=\sum_{i=1}^{n}\sum_{j=1}^{r}
		g((\nabla_{X_i}T)_{U_j}U_j,X_i),
	\end{equation}
	
	for an orthonormal $\pi$-adapted frame 
	$\{X_i,U_j\}_{1\leq i\leq n,\,1\leq j\leq r}$ on the total manifold
	$(M,g)$. The Ricci curvature tensor $Ric$ and the scalar curvature $\tau$ on	$M$ is given as
	\begin{align}\label{2.12}
		\operatorname{Ric}(U,W)=&\widehat{\operatorname{Ric}}(U,W)-g(T_U W,N)
		+\sum_{i=1}^{n}\Big[g((\nabla_{X_i}T)_U W,X_i)\\
		&+g(A_{X_i}U,A_{X_i}W)\Big],\notag
	\end{align}
	\begin{align}\label{2.13}
		\operatorname{Ric}(X,Y)=&\operatorname{Ric}'(X',Y')\circ\pi
		+\frac{1}{2}\Big[g(\nabla_X N,Y)+g(\nabla_Y N,X)\Big]\\
		&-2\sum_{i=1}^{n}g(A_X X_i,A_Y X_i)
		-\sum_{j=1}^{r}g(T_{U_j}X,T_{U_j}Y),\notag
	\end{align}
	\begin{align}\label{2.14}
		\operatorname{Ric}(U,X)=&g(\nabla_U N,X)
		-\sum_{j=1}^{r}g((\nabla_{U_j}T)_{U_j}U,X)\\
		&+\sum_{i=1}^{n}\Big[g((\nabla_{X_i}A)_{X_i}X,U)
		-2g(A_X X_i,T_U X_i)\Big],\notag
	\end{align}
	\begin{align}\label{2.15}
		\tau=\tau'\circ\pi+\widehat{\tau}-(||N||^2+||A||^2+||T||^2)+2\sum_{i=1}^{n}g(\nabla_{X_i}N,X_i),
	\end{align}
	where $\operatorname{Ric}'$ and $\tau'$  are the Ricci curvature and the scalar curvature of the base manifold and $\widehat{\operatorname{Ric}}$ and $\widehat{\tau}$ are the Ricci curvature and the scalar curvature of fibers, respectively.
	
	A hyperbolic Ricci soliton is defined as a self-similar solution of the hyperbolic geometric flow. A systematic formulation of hyperbolic Ricci solitons was developed in \cite{hf}. Let	$(M,g)$ be a Riemannian manifold. The 5-tuple 
	$(M,g,{\zeta},\lambda,\mu)$ is called a hyperbolic Ricci soliton, if there exists a smooth vector field 
	${\zeta}$ and real constants $\lambda$ and $\mu$, for any vector fields $E,F\in\Gamma(TM)$ such that
	
	\begin{equation}\label{2.16}
		\frac{1}{2}(L_{\zeta} L_{\zeta} g)(E,F)+\lambda (L_{\zeta} g)(E,F)+\operatorname{Ric}(E,F)=\mu g(E,F),
	\end{equation}
	where $Ric$ denotes the Ricci curvature tensor of $(M,g)$ and $L_{\zeta} g$ is a Lie derivative of metric $g$ is given by
	\begin{align}\label{2.17}
		L_{\zeta} g(E,F)=g(\nabla_E {\zeta},F)+g(E,\nabla_F {\zeta}),
	\end{align}
	and the second Lie derivative $L_{\zeta} L_{\zeta} g$ is given by
	\begin{align}\label{2.18}
		(L_{\zeta} L_{\zeta} g)(E,F)=& g(\nabla_{\zeta} \nabla_E {\zeta},F)+g(\nabla_{\zeta} \nabla_F {\zeta},E)-g(\nabla_{[{\zeta},E]} {\zeta},F) \\\notag
		& -g(\nabla_{[{\zeta},F]} {\zeta},E)+2g(\nabla_E {\zeta},\nabla_F {\zeta}). 
	\end{align}
	If ${\zeta}$ is a Killing vector field (i.e., $L_{\zeta} g=0$), the manifold is reduced to an Einstein manifold. If ${\zeta}$ is a 2-Killing vector field with $L_{\zeta} L_{\zeta} g=0$ and $\lambda=\frac{1}{2}$, the hyperbolic Ricci soliton transforms into a classical Ricci soliton. A hyperbolic Ricci soliton is classified as \textit{shrinking}, \textit{steady}, or \textit{expanding} according as $\lambda<0$, $\lambda=0$, or $\lambda>0$, respectively. Likewise, its rate of change is referred to as \textit{shrinking}, \textit{steady}, or \textit{expanding} when $\mu<0$, $\mu=0$, or $\mu>0$, respectively. A hyperbolic Ricci soliton becomes an almost hyperbolic Ricci soliton when $\lambda$ and $\mu$  is taken as a smooth function on the manifold rather than a constant.
	Finally, for any vector field $E$ on a Riemannian manifold 
	$(M,g)$ is said to be conformal, if it satisfies
	\begin{equation*}
		L_E g=2\sigma g,
	\end{equation*}
	where $\sigma$ is a smooth function on $M$.
	The vector field $E$ is homothetic, when $\sigma$ is constant and is a Killing, when $\sigma=0$.
	
	\section{ Riemannian Submersion with Hyperbolic Ricci Soliton}
	In this section, we investigate a Riemannian submersion whose total space is endowed with a hyperbolic Ricci soliton and study the geometric characteristics arising from this submersion.
	
	\begin{lemma} $\cite{em}$
		Let $\pi:(M,g)\rightarrow(B,g')$ be Riemannian submersion between Riemannian manifolds. Then the following conditions are equivalent to each other:
		\begin{enumerate}
			\item [(i)] the vertical distribution $\mathcal{V}$ is parallel,
			\item [(ii)] the horizontal distribution $\mathcal{H}$ is parallel,
			\item [(iii)] the fundamental tensors $T$ and $A$ are vanishes identically, i.e. $T\equiv0$ and $A\equiv0$.
		\end{enumerate}
	\end{lemma}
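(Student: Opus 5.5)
The plan is to prove the cycle (i)$\Rightarrow$(iii)$\Rightarrow$(ii)$\Rightarrow$(i), working throughout with the decompositions \eqref{2.1}--\eqref{2.4}. A distribution $\mathcal{D}$ is called parallel when $\nabla_E F\in\Gamma(\mathcal{D})$ for every $E\in\Gamma(TM)$ and every $F\in\Gamma(\mathcal{D})$. The one extra ingredient is metric compatibility: since $\mathcal{V}$ and $\mathcal{H}$ are mutually orthogonal complements, $\mathcal{V}$ is parallel if and only if $\mathcal{H}$ is. Concretely, suppose $\mathcal{V}$ is parallel. For $X$ horizontal, $W$ vertical and any $E$,
\begin{equation*}
0=E\,g(X,W)=g(\nabla_E X,W)+g(X,\nabla_E W)=g(\nabla_E X,W),
\end{equation*}
because $\nabla_E W$ is vertical. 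Hence $\nabla_E X$ is horizontal. The converse is symmetric. This already gives (i)$\Leftrightarrow$(ii), so it remains to link either of them with (iii).

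\textbf{(i) and (ii) imply (iii).} Assume both distributions are parallel. For $U,W$ vertical, $T_UW$ is the horizontal part of $\nabla_UW$ by \eqref{2.1}, so it vanishes. For $X$ horizontal and $U$ vertical, $T_UX$ is the vertical part of $\nabla_UX$ by \eqref{2.2}, so it vanishes. Since $T_E=T_{\mathcal{V}E}$, these cases give $T\equiv0$. Similarly, $A_XY$ is the vertical part of $\nabla_XY$ by \eqref{2.4}, and $A_XW$ is the horizontal part of $\nabla_XW$ by \eqref{2.3}. Both vanish, and since $A_E=A_{\mathcal{H}E}$, we get $A\equiv0$.

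\textbf{(iii) implies (i).} Assume $T\equiv0$ and $A\equiv0$, and let $W$ be vertical and $E=\mathcal{V}E+\mathcal{H}E$ arbitrary. By \eqref{2.1}, $\nabla_{\mathcal{V}E}W=\widehat{\nabla}_{\mathcal{V}E}W$, which is vertical. By \eqref{2.3}, $\nabla_{\mathcal{H}E}W=\mathcal{V}\nabla_{\mathcal{H}E}W$, which is also vertical. Hence $\nabla_EW\in\Gamma(\mathcal{V})$, so $\mathcal{V}$ is parallel.

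There is no real obstacle here. The only points needing care are two bookkeeping checks: that the vanishing of $T$ and $A$ on the pure vertical and horizontal arguments, together with $T_E=T_{\mathcal{V}E}$ and $A_E=A_{\mathcal{H}E}$, covers every argument pair, and that each mixed component of $\nabla$ is exactly the corresponding $T$ or $A$ term in \eqref{2.1}--\eqref{2.4}.
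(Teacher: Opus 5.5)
Your proof is correct and complete. The orthogonality argument gives (i)$\Leftrightarrow$(ii), and the splittings (2.1)--(2.4) identify the mixed components of $\nabla$ exactly with $T$ and $A$, which gives the link to (iii).

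The paper itself gives no proof of this lemma; it is quoted from Meri\c{c} and Kili\c{c}. Your direct verification from the definitions is the standard argument.

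One small economy is available. In the step towards (iii) you do not need (ii) at all. Parallelism of $\mathcal{V}$ already gives $T_UW=0$ and $A_XW=0$. The skew-symmetry of $T_U$ and $A_X$ then forces $T_UX=0$ and $A_XY=0$, since these are vertical and orthogonal to every vertical field.
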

	\begin{theorem}
		Let $\pi:(M,g,\xi,\lambda,\mu)\rightarrow (B,g')$ be a Riemannian submersion between Riemannian manifolds. The total manifold admits a hyperbolic Ricci soliton with the vertical potential field $\xi$ and vanishing fundamental tensors, then any fiber of $\pi$ admits a hyperbolic Ricci soliton.
		Moreover, if $\xi$ is a unit vector field, then the rate of change of soliton induced on each fiber is 
		 \textit{shrinking} if $\widehat{\tau}<0$,
		 \textit{steady} if $\widehat{\tau}=0$, and  
	 \textit{expanding} if $\widehat{\tau}>0$.
	\end{theorem}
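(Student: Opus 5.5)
Since $T\equiv 0$ and $A\equiv 0$, the preceding lemma tells us that both $\mathcal{V}$ and $\mathcal{H}$ are parallel. In particular, \eqref{2.1} gives $\nabla_U W=\widehat{\nabla}_U W$ for vertical $U,W$. So each fiber is totally geodesic and its induced connection is just the restriction of $\nabla$. The plan is to restrict the soliton equation \eqref{2.16} to vertical arguments $E=U$, $F=W$ and express every term through fiber quantities.

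First, because $\xi$ is vertical and $\mathcal{V}$ is parallel, $\nabla_U\xi=\widehat{\nabla}_U\xi$ is vertical. Hence \eqref{2.17} gives $(L_\xi g)(U,W)=(\widehat{L}_\xi\hat g)(U,W)$, where $\widehat{L}$ is the Lie derivative on the fiber.

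Next we treat \eqref{2.18} term by term.
\begin{itemize}
\item Since $\nabla_U\xi$ is vertical, $\nabla_\xi\nabla_U\xi=\widehat{\nabla}_\xi\widehat{\nabla}_U\xi$.
\item The bracket $[\xi,U]=\nabla_\xi U-\nabla_U\xi$ is vertical and equals the fiber bracket, so $\nabla_{[\xi,U]}\xi=\widehat{\nabla}_{[\xi,U]}\xi$.
\item The term $g(\nabla_U\xi,\nabla_W\xi)$ is already intrinsic to the fiber.
\end{itemize}
Thus $(L_\xi L_\xi g)(U,W)=(\widehat{L}_\xi\widehat{L}_\xi \hat g)(U,W)$.

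For the Ricci term, put $T=0$ and $A=0$ into \eqref{2.12}. The terms $g(T_UW,N)$ and $g(A_{X_i}U,A_{X_i}W)$ vanish. The term $(\nabla_{X_i}T)_UW$ vanishes as well, since $T$ is identically zero and hence so is its covariant derivative. This leaves $\operatorname{Ric}(U,W)=\widehat{\operatorname{Ric}}(U,W)$. Substituting everything into \eqref{2.16} yields
\[
\tfrac12(\widehat{L}_\xi\widehat{L}_\xi \hat g)(U,W)+\lambda(\widehat{L}_\xi \hat g)(U,W)+\widehat{\operatorname{Ric}}(U,W)=\mu\, \hat g(U,W),
\]
so every fiber is a hyperbolic Ricci soliton $(F,\hat g,\xi,\lambda,\mu)$ with the same constants.

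For the second part, trace the fiber equation over a vertical orthonormal frame $\{U_j\}_{j=1}^r$. This gives $\tfrac12\operatorname{tr}(\widehat{L}_\xi\widehat{L}_\xi \hat g)+2\lambda\,\widehat{\operatorname{div}}\,\xi+\widehat\tau=r\mu$. The intended use of the unit hypothesis is to kill the two Lie-derivative traces, leaving $\mu=\widehat\tau/r$, so that the sign of $\mu$ is the sign of $\widehat\tau$. The sign conventions (shrinking, steady, expanding according as $\mu<0$, $\mu=0$, $\mu>0$) then give the claim.

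The main obstacle is exactly this step: $\|\xi\|=1$ does not by itself force $\widehat{\operatorname{div}}\,\xi=0$ or the second-derivative trace to vanish.
\begin{itemize}
\item I would first check how far unit length goes. Differentiating $g(\xi,\xi)=1$ gives $g(\nabla_U\xi,\xi)=0$, which controls $(\widehat L_\xi \hat g)(\xi,\xi)=0$ and the $\xi$-components only.
\item For the full traces, the natural route is to evaluate the fiber equation at $U=W=\xi$. By the above, the first-order term drops out there, and one gets $\tfrac12(\widehat{L}_\xi\widehat{L}_\xi\hat g)(\xi,\xi)+\widehat{\operatorname{Ric}}(\xi,\xi)=\mu$.
\item If this does not close the argument, I would add an explicit assumption that $\xi$ is Killing (or at least divergence-free, with vanishing trace of $\widehat L_\xi\widehat L_\xi \hat g$) on the fibers. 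The trace identity then gives $\mu=\widehat\tau/r$, and the sign conclusion follows at once.
\end{itemize}
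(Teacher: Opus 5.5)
Your reduction of the vertical part of (2.16) to the fibre equation is the paper's own argument. $T\equiv0$ and $A\equiv0$ give $\nabla=\widehat{\nabla}$ on vertical fields and reduce (2.12) to $\operatorname{Ric}(U,W)=\widehat{\operatorname{Ric}}(U,W)$, so each fibre inherits the soliton with the same $\lambda,\mu$.

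For the ``moreover'' part your suspicion is correct, and the gap lies in the paper, not only in your proposal. The paper simply says that, since $\xi$ is unit, tracing the fibre equation gives $\widehat{\tau}=r\mu$. It gives no reason why $\operatorname{tr}(\widehat{L}_\xi\widehat{L}_\xi\hat g)$ and $\widehat{\operatorname{div}}\,\xi$ should vanish.

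Your evaluation at $(\xi,\xi)$ can be sharpened. For any symmetric $2$-tensor $h$ one has $(L_\xi h)(\xi,\xi)=\xi\bigl(h(\xi,\xi)\bigr)$. So $\|\xi\|=1$ kills both $(L_\xi g)(\xi,\xi)$ and $(L_\xi L_\xi g)(\xi,\xi)$, leaving $\mu=\widehat{\operatorname{Ric}}(\xi,\xi)$. This proves the claim for $r\le 2$: for $r=1$ it gives $\mu=0=\widehat\tau$, and for $r=2$, $\widehat{\operatorname{Ric}}=K\hat g$ gives $\widehat\tau=2K=2\mu$.

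For $r\ge 3$ the claim is false. Take $M=(0,\infty)\times\mathbb{R}^2\times\mathbb{R}^n$ with $g=dt^2+t^2(dx^2+dy^2)+\sum_i dz_i^2$, let $\pi$ be the projection onto the flat factor $\mathbb{R}^n$, and let $\xi=\partial_t$.
\begin{itemize}
\item Since $M$ is a Riemannian product, $T=A=0$.
\item One computes $L_\xi g=2t(dx^2+dy^2)$, $L_\xi L_\xi g=2(dx^2+dy^2)$ and $\operatorname{Ric}=-(dx^2+dy^2)$.
\item Hence $(M,g,\xi,0,0)$ is a hyperbolic Ricci soliton with unit vertical potential. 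The induced fibre constants are forced to be $\lambda=\mu=0$ by the $dt^2$ and $dx^2$ components, so the rate is steady.
\item Yet $\widehat{\tau}=-2/t^2<0$, contradicting ``shrinking if $\widehat\tau<0$''.
\item Adding flat factors to the fibre gives a counterexample for every $r\ge 3$.
\end{itemize}
So the step cannot be closed as stated. An extra hypothesis of the kind you propose ($\xi$ Killing on the fibres, or vanishing of the two traces), or the restriction $r\le 2$, is genuinely required.
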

	\begin{proof}
		Since $(M,g)$ admits a hyperbolic Ricci soliton  with vertical potential vector field $\xi$, for any $U,W\in\Gamma(\mathcal{V})$  we have
		\begin{equation}\label{3.1}
			\frac{1}{2}(L_\xi L_\xi g)(U,W)+\lambda (L_\xi g)(U,W)+\operatorname{Ric}(U,W)=\mu g(U,W).
		\end{equation} 
		Using (\ref{2.17}) and (\ref{2.18}) in the above equation, we get
		\begin{align}\label{3.2}
			\frac{1}{2} \Big[g(\nabla_\xi \nabla_U \xi,W)+g(\nabla_\xi \nabla_W \xi,U)-g(\nabla_{[\xi,U]} \xi,W) 
			-g(\nabla_{[\xi,W]} \xi,U)\\\notag+2g(\nabla_U \xi,\nabla_W \xi)\Big] +\lambda\Big[g(\nabla_U \xi,W)+g(U,\nabla_W \xi)\Big]\\
			+\operatorname{Ric}(U,W)=\mu g(U,W),\notag
		\end{align}
		where $\{X_i\}$ denotes the orthonormal basis of the horizontal distribution $\mathcal{H}$. 	Since the  fundamental tensors vanish identically on the manifold and using (\ref{2.1}), (\ref{2.2}) and (\ref{2.12}) in (\ref{3.2}), we get 
		\begin{align}\label{frb}
			\frac{1}{2} \Big[g(\widehat{\nabla}_\xi \widehat{\nabla}_U \xi,W)+g(\widehat{\nabla}_\xi \widehat{\nabla}_W \xi,U)-g(\widehat{\nabla}_{[\xi,U]} \xi,W) 
			-g(\widehat{\nabla}_{[\xi,W]} \xi,U)\\+2g(\widehat{\nabla}_U \xi,\widehat{\nabla}_W \xi)\Big]\notag +\lambda\Big[g(\widehat{\nabla}_U \xi,W)+g(U,\widehat{\nabla}_W \xi)\Big]\\ +\widehat{\operatorname{Ric}}(U,W)=\mu g(U,W).\notag 
		\end{align}
		Hence, any fibre of the Riemannian submersion admits a hyperbolic Ricci soliton. 	Since $\xi$ is unit vector field and by tracing $\eqref{frb}$, we obtain 	
		$\widehat{\tau}=r\mu.$ Consequently, the rate of change of soliton on induced fibers is shrinking if 	$\widehat{\tau}<0$, steady if 	$\widehat{\tau}=0$  and expanding if 	$\widehat{\tau}>0$, respectively.
		
	\end{proof}
	\begin{theorem}
		Let $\pi:(M,g,\eta,\lambda,\mu)\rightarrow (B,g')$ be a Riemannian submersion between Riemannian manifolds. The total manifold admits a hyperbolic Ricci soliton with the horizontal potential field $\eta$ and vanishing fundamental tensors, then any fiber of $\pi$ is an Einstein manifold if and only if $\nabla\eta$ is vertical.
	\end{theorem}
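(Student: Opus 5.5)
We restrict the soliton equation (\ref{2.16}) to vertical vector fields $U,W\in\Gamma(\mathcal{V})$, exactly as in (\ref{3.1})--(\ref{3.2}) of the preceding proof, with $\xi$ replaced by the horizontal field $\eta$. This gives
\begin{align*}
\frac{1}{2}\Big[g(\nabla_\eta \nabla_U \eta,W)+g(\nabla_\eta \nabla_W \eta,U)-g(\nabla_{[\eta,U]} \eta,W)-g(\nabla_{[\eta,W]} \eta,U)+2g(\nabla_U \eta,\nabla_W \eta)\Big]\\
+\lambda\Big[g(\nabla_U \eta,W)+g(U,\nabla_W \eta)\Big]+\operatorname{Ric}(U,W)=\mu g(U,W).
\end{align*}
Since $T\equiv0$ and $A\equiv0$, the Lemma of \cite{em} stated above says both distributions are parallel. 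Hence $\nabla_E\eta$ is horizontal for every $E$, because $\eta$ is horizontal and $\mathcal{H}$ is parallel. In the same way, (\ref{2.12}) reduces to $\operatorname{Ric}(U,W)=\widehat{\operatorname{Ric}}(U,W)$.

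Using this, the $\lambda$-terms vanish, since $g(\nabla_U\eta,W)=0$ for horizontal $\nabla_U\eta$ and vertical $W$. The terms $g(\nabla_\eta\nabla_U\eta,W)$ and $g(\nabla_{[\eta,U]}\eta,W)$ also vanish, because $\nabla_U\eta$ is a horizontal field and $\mathcal{H}$ is parallel, so $\nabla_\eta\nabla_U\eta$ and $\nabla_{[\eta,U]}\eta$ are horizontal. The only surviving soliton term is therefore $g(\nabla_U\eta,\nabla_W\eta)$, and the restricted equation becomes
\begin{equation*}
\widehat{\operatorname{Ric}}(U,W)=\mu g(U,W)-g(\nabla_U\eta,\nabla_W\eta).
\end{equation*}

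Next we read off the equivalence. The fibre is Einstein precisely when $g(\nabla_U\eta,\nabla_W\eta)$ is proportional to $g(U,W)$. If we interpret ``$\nabla\eta$ is vertical'' as meaning that $\nabla_U\eta$ has no horizontal part for vertical $U$, then parallelism of $\mathcal{H}$ forces $\nabla_U\eta=0$. The extra term then vanishes, and $\widehat{\operatorname{Ric}}=\mu \hat g$, so the fibre is Einstein with constant $\mu$. For the converse, assume $\widehat{\operatorname{Ric}}=\rho\hat g$. Then $g(\nabla_U\eta,\nabla_W\eta)=(\mu-\rho)g(U,W)$.

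The main obstacle is the converse direction. We must deduce from the last identity that $\nabla_U\eta$ is vertical, i.e. zero, which amounts to forcing $\mu=\rho$. The first attempt would be to take $U=W$, which gives $\|\nabla_U\eta\|^2=(\mu-\rho)\|U\|^2$. This only shows that $U\mapsto\nabla_U\eta$ is a homothety from $\mathcal{V}$ into $\mathcal{H}$; it does not force the constant to vanish. So we expect to need the paper's intended reading of the hypothesis, namely that the Einstein constant is the soliton's $\mu$. With that reading, $\rho=\mu$ gives $\|\nabla_U\eta\|=0$ for every vertical $U$, so $\nabla_U\eta$ is trivially vertical, and combined with the first direction this yields the equivalence.
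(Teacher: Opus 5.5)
Your reduction is the same as the paper's. Restricting \eqref{2.16} to vertical $U,W$ and discarding every $T$- and $A$-term gives exactly the paper's identity $g(\mathcal{H}\nabla_U\eta,\mathcal{H}\nabla_W\eta)+\widehat{\operatorname{Ric}}(U,W)=\mu g(U,W)$. You discard those terms via parallelism of $\mathcal{H}$ (the lemma opening Section~3); the paper expands with \eqref{2.1}--\eqref{2.4} and sets $T=A=0$. Your ``if'' direction is correct, as is your observation that ``vertical'' here forces $\nabla_U\eta=0$. The paper then reads the ``if and only if'' straight off this identity, so it has exactly the hole you found. An Einstein fibre with constant $\rho$ only tells you that $U\mapsto\nabla_U\eta$ is a homothety $\mathcal{V}\to\mathcal{H}$ of ratio $\mu-\rho$.

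No further argument closes this hole, including the mixed and horizontal components of \eqref{2.16}, because the ``only if'' direction is false. Take $M=\mathbb{R}\times S^3$ with $g=dt^2+g_{S^3}$, where $S^3$ is the unit sphere, so $\operatorname{Ric}=2g_{S^3}$. Let $\pi(t,p)=p$, so $T\equiv0$ and $A\equiv0$. Let $V_p=ip$ be the unit Hopf Killing field and set $\eta=\sqrt2\,t\,V$, which is horizontal. Use $L_{fV}g=fL_Vg+df\otimes V^\flat+V^\flat\otimes df$, $L_\eta dt=0$, and $L_\eta V^\flat=\sqrt2\,t\,L_VV^\flat+\sqrt2\,|V|^2dt=\sqrt2\,dt$. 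These give $L_\eta g=\sqrt2\,(dt\otimes V^\flat+V^\flat\otimes dt)$ and $L_\eta L_\eta g=4\,dt\otimes dt$, hence
\[
\tfrac12 L_\eta L_\eta g+0\cdot L_\eta g+\operatorname{Ric}=2\,dt\otimes dt+2g_{S^3}=2g .
\]
This is a hyperbolic Ricci soliton with $\lambda=0$ and $\mu=2$. Its fibres are flat lines, hence Einstein with constant $0$. Yet $\nabla_{\partial_t}\eta=\sqrt2\,V\neq0$ is horizontal, so $\nabla\eta$ is not vertical. Replacing the data by $\mathbb{R}^k\times(S^3)^k$ and $\eta=\sqrt2\sum_j t_jV_j$ gives flat fibres of any dimension $k$. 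What can actually be proved is: the fibres are Einstein \emph{with Einstein constant $\mu$} if and only if $\nabla_U\eta=0$ for every vertical $U$. Your ``intended reading'' is exactly this corrected statement. Present it as a strengthened hypothesis, not as an interpretation of the theorem, since the theorem as written cannot be proved.
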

	\begin{proof}
		Since the total space $(M,g)$ admits a hyperbolic Ricci soliton  with the horizontal potential field $\eta$, for any $U,W\in\Gamma(\mathcal{V})$ we have
		\begin{equation*}
			\frac{1}{2}(L_\eta L_\eta g)(U,W)+\lambda (L_\eta g)(U,W)+\operatorname{Ric}(U,W)=\mu g(U,W).
		\end{equation*} 
		Using (\ref{2.17}) and (\ref{2.18}) in the above equation, we get
		\begin{align*}
			\frac{1}{2} \Big[g(\nabla_\eta \nabla_U \eta,W)+g(\nabla_\eta \nabla_W \eta,U)-g(\nabla_{[\eta,U]} \eta,W) 
			-g(\nabla_{[\eta,W]} \eta,U)\\\notag+2g(\nabla_U \eta,\nabla_W \eta)\Big] +\lambda\Big[g(\nabla_U \eta,W)+g(U,\nabla_W \eta)\Big]\\
			+\operatorname{Ric}(U,W)=\mu g(U,W).
		\end{align*}
		Again using (\ref{2.12}) in the preceding equation, we obtain
		\begin{align}\label{3.3}
			\frac{1}{2}\, \Big[g(A_{\eta} (\mathcal{H}\nabla_{U} \eta),W) +g(\mathcal{V}\nabla_{\eta} (T_{U} \eta),W) + g(A_{\eta} (\mathcal{H}\nabla_{W} \eta),U)\\ +g(\mathcal{V}\nabla_{\eta} (T_{W} \eta),U)
			-g(T_{[\eta,{U}]}\eta,{W}) \notag
			-g(T_{[\eta,{W}]}\eta,{U})
			+2g(T_{U} \eta,T_{W}\notag \eta)\\ \notag+2g(\mathcal{H}\nabla_{U} \eta,\mathcal{H}\nabla_{W} \eta)\Big] +\lambda\Big[g(T_U\notag \eta,W)+g(U,T_W \eta)\Big]\notag
			+\widehat{\operatorname{Ric}}(U,W)\\-\sum_{j=1}^{r}g(T_U W,T_{U_j}{U_j})+\notag
			\sum_{i=1}^{n}\Big[g(\nabla_{X_i} H,X_i)g(U,W)
			+g(A_{X_i}U,A_{X_i}W)\Big]\\=\mu g(U,W).\notag
		\end{align}
		Since fundamental tensors vanish identically on the manifold,  from the previous equation we get
		\begin{align*}
			g(\mathcal{H}{\nabla}_U \eta,\mathcal{H}{\nabla}_W \eta)+\widehat{\operatorname{Ric}}(U,W)=\mu g(U,W). 
		\end{align*}
		Hence, any fiber of $\pi$ is an Einstein manifold if and only if $\nabla\eta$ is vertical.  
	\end{proof}
	\begin{theorem}\label{T3.4}
		Let $\pi:(M,g,\xi,\lambda,\mu)\rightarrow (B,g')$ be a Riemannian submersion between Riemannian manifolds. The total manifold admits a hyperbolic Ricci soliton with the vertical potential field $\xi$ and integrable  horizontal distribution. If any fiber of $\pi$ is totally umbilical and has a constant mean curvature, then it admits a hyperbolic Ricci soliton.
	\end{theorem}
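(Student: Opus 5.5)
We restrict the soliton equation (\ref{2.16}) to vertical arguments $U,W\in\Gamma(\mathcal{V})$, exactly as in (\ref{3.1})--(\ref{3.2}). We then split every covariant derivative into its vertical and horizontal parts, using (\ref{2.1}) and (\ref{2.2}) for derivatives along vertical fields and (\ref{2.3}) for derivatives along horizontal ones. The hypothesis that $\mathcal{H}$ is integrable gives $A\equiv0$, so by (\ref{2.3}) $\nabla_X W=\mathcal{V}\nabla_X W$. In the Ricci formula (\ref{2.12}) the term $\sum_i g(A_{X_i}U,A_{X_i}W)$ then disappears.

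Next we use total umbilicity (\ref{2.8}), $T_UW=g(U,W)H$, so that $N=rH$ by (\ref{2.7}). Since $\xi$ is vertical, we have $\nabla_U\xi=\widehat{\nabla}_U\xi+g(U,\xi)H$. The horizontal pieces drop out of every inner product with a vertical $W$. Hence the first Lie derivative satisfies $(L_\xi g)(U,W)=(\widehat{L}_\xi \hat g)(U,W)$. For the second Lie derivative (\ref{2.18}) we expand each term:
\begin{itemize}
\item[(a)] $g(\nabla_\xi\nabla_U\xi,W)=g(\widehat{\nabla}_\xi\widehat{\nabla}_U\xi,W)+g(U,\xi)\,g(\nabla_\xi H,W)$, and $g(\nabla_\xi H,W)=-g(H,T_\xi W)=-g(\xi,W)\|H\|^2$;
\item[(b)] the bracket $[\xi,U]$ is vertical, so $g(\nabla_{[\xi,U]}\xi,W)=g(\widehat{\nabla}_{[\xi,U]}\xi,W)$;
\item[(c)] $g(\nabla_U\xi,\nabla_W\xi)=g(\widehat{\nabla}_U\xi,\widehat{\nabla}_W\xi)+g(U,\xi)g(W,\xi)\|H\|^2$.
\end{itemize}
Collecting terms, the $\|H\|^2$ contributions from (a) and (c) should cancel, since $-\tfrac12\cdot2\|H\|^2\,g(U,\xi)g(W,\xi)+\tfrac12\cdot2\|H\|^2\,g(U,\xi)g(W,\xi)=0$. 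Thus $(L_\xi L_\xi g)(U,W)=(\widehat{L}_\xi\widehat{L}_\xi\hat g)(U,W)$. I expect this bookkeeping to be the main obstacle: the factors must be checked carefully, including the symmetrization in $U,W$.

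For the Ricci term, umbilicity gives $g(T_UW,N)=r\|H\|^2 g(U,W)$. We also have $g((\nabla_{X_i}T)_UW,X_i)=g(U,W)\,g(\nabla_{X_i}H,X_i)$, because $\nabla_{X_i}$ commutes with the umbilic form modulo terms that vanish on $g$. Therefore
$$\operatorname{Ric}(U,W)=\widehat{\operatorname{Ric}}(U,W)+\big(\widetilde{\delta}(H)-r\|H\|^2\big)g(U,W).$$
Constant mean curvature is used to make $\|H\|^2$ constant. For the divergence term, one argues as in \cite{em} that it is constant along the fibre, or absorbs it into $\mu$ as a function constant on each fiber.

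Substituting everything gives
$$\tfrac12(\widehat L_\xi\widehat L_\xi\hat g)+\lambda\widehat L_\xi\hat g+\widehat{\operatorname{Ric}}=\hat\mu\,\hat g,\qquad \hat\mu=\mu-\widetilde{\delta}(H)+r\|H\|^2 .$$
This is a hyperbolic Ricci soliton on each fiber, with potential field $\xi$ and constants $\lambda$ and $\hat\mu$.
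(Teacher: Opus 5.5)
Your route is the paper's: restrict \eqref{2.16} to vertical $U,W$, kill $A$ by integrability of $\mathcal{H}$, and insert $T_UW=g(U,W)H$ into \eqref{2.12}. Your term-by-term check that $(L_\xi g)(U,W)$ and $(L_\xi L_\xi g)(U,W)$ reduce to the intrinsic fibre quantities is correct. It even holds without umbilicity: since $T_U\xi=T_\xi U$, the terms $-g(T_U\xi,T_\xi W)-g(T_W\xi,T_\xi U)+2g(T_U\xi,T_W\xi)$ cancel for any vertical $\xi$. This check supplies the justification for the passage from \eqref{3.2} to \eqref{3.4}, which the paper only asserts. Your identity $\operatorname{Ric}(U,W)=\widehat{\operatorname{Ric}}(U,W)+(\widetilde{\delta}(H)-r\|H\|^2)g(U,W)$ coincides with the paper's intermediate equation.

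The last step does not go through as written. A hyperbolic Ricci soliton on a fibre requires $\hat\mu=\mu-\widetilde{\delta}(H)+r\|H\|^2$ to be constant on that fibre. You use constant mean curvature only to make $\|H\|^2$ constant, and leave the constancy of $\widetilde{\delta}(H)$ to an unspecified appeal to the literature. ``Absorbing it into $\mu$'' presupposes exactly that constancy; without it you only obtain an almost hyperbolic Ricci soliton on the fibre.

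That constancy does not follow from $\|H\|$ being constant. Take a twisted product $g=g_B+f^2g_F$ with $f$ a function on $B\times F$. Then $A\equiv0$ and $T_UW=g(U,W)H$ with $H=-\mathcal{H}\operatorname{grad}\ln f$, so $\widetilde{\delta}(H)=-\Delta_B\ln f$, where $\Delta_B$ is the Laplacian of $(B,g_B)$. With $B\subset\mathbb{R}^2$ flat and $\ln f(t,y)=|t-p(y)|$ for a nonconstant $p$, you get $\|H\|\equiv1$ but $\widetilde{\delta}(H)=-1/|t-p(y)|$, which varies along each fibre.

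The paper disposes of the term by declaring $H$ constant and dropping $\sum_i g(\nabla_{X_i}H,X_i)$, ending with $\mu+r\|H\|^2$. Read literally, $\nabla H=0$ would force $H=0$, since $g(\nabla_UH,U)=-\|H\|^2$ for unit vertical $U$. The clean repair for your argument is to read the hypothesis as parallel mean curvature of the fibres, $\mathcal{H}\nabla_UH=0$ for vertical $U$. With $A\equiv0$ this gives $\mathcal{H}[H,U]=A_HU-\mathcal{H}\nabla_UH=0$, so $H$ is basic and $\pi$-related to some $H'$ on $B$. Then $\|H\|^2=\|H'\|^2\circ\pi$ and $\widetilde{\delta}(H)=(\operatorname{div}'H')\circ\pi$ are constant on each fibre, and your $\hat\mu$ is a genuine constant there.
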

	\begin{proof}
		Since the total space $(M,g)$ of Riemannian submersion $\pi$ admits a hyperbolic Ricci soliton  with vertical potential vector field $\xi$, by using (\ref{2.1}) and (\ref{2.2}) in (\ref{3.2}), we obtain
		\begin{align}\label{3.4}
			\frac{1}{2} \Big[g(\widehat{\nabla}_\xi \widehat{\nabla}_U \xi,W)+g(\widehat{\nabla}_\xi \widehat{\nabla}_W \xi,U)-g(\widehat{\nabla}_{[\xi,U]} \xi,W) 
			-g(\widehat{\nabla}_{[\xi,W]} \xi,U)\\\notag+2g(\widehat{\nabla}_U \xi,\widehat{\nabla}_W \xi)\Big]+\lambda\Big[g(\widehat{\nabla}_U \xi,W)+g(U,\widehat{\nabla}_W \xi)\Big]\\
			+\operatorname{Ric}(U,W)=\mu g(U,W), \notag
		\end{align}
		for any $U,W \in\Gamma(\mathcal{V})$. The manifold $(M,g)$ has totally umbilical fibers, therefore using (\ref{2.12}) in (\ref{3.4}), we get
		\begin{align*}
			\frac{1}{2} \Big[g(\widehat{\nabla}_\xi \widehat{\nabla}_U \xi,W)+g(\widehat{\nabla}_\xi \widehat{\nabla}_W \xi,U)-g(\widehat{\nabla}_{[\xi,U]} \xi,W) 
			-g(\widehat{\nabla}_{[\xi,W]} \xi,U)\\+2g(\widehat{\nabla}_U \xi,\widehat{\nabla}_W \xi)\Big]+\lambda\Big[g(\widehat{\nabla}_U \xi,W)+g(U,\widehat{\nabla}_W \xi)\Big]+\widehat{\operatorname{Ric}}(U,W)\\-\sum_{j=1}^{r}g(T_U W,T_{U_j}{U_j})+
			\sum_{i=1}^{n}\Big[g(\nabla_{X_i} H,X_i)g(U,W)
			\\+g(A_{X_i}U,A_{X_i}W)\Big]=\mu g(U,W). 
		\end{align*}
		Since the horizontal distribution $\mathcal{H}$ is integrable, from the preceding equation 
		\begin{align*}
			\frac{1}{2} \Big[g(\widehat{\nabla}_\xi \widehat{\nabla}_U \xi,W)+g(\widehat{\nabla}_\xi \widehat{\nabla}_W \xi,U)-
			g(\widehat{\nabla}_{[\xi,U]} \xi,W)-g(\widehat{\nabla}_{[\xi,W]} \xi,U)\\+ 2g(\widehat{\nabla}_U \xi,\widehat{\nabla}_W \xi)\Big] +\lambda\Big[g(\widehat{\nabla}_U \xi,W)+g(U,\widehat{\nabla}_W \xi)\Big]+\widehat{\operatorname{Ric}}(U,W)\\-g(U,W)g(H,N)+	\sum_{i=1}^{n}g(\nabla_{X_i} H,X_i)g(U,W)=\mu g(U,W), 
		\end{align*}
		where $H$ is a mean curvature vector of any fiber of $\pi$, which is constant. So from the above equation we obtain 
		\begin{equation*}
		\frac{1}{2}(L_\xi L_\xi g)(U,W)+\lambda (L_\xi g)(U,W) +\widehat{\operatorname{Ric}}(U,W)=(\mu+r||H||^2)g(U,W).
		\end{equation*}
		Thus any fiber of Riemannian submersion is  hyperbolic Ricci soliton. 
	\end{proof}

	\begin{theorem}
		Let $\pi:(M,g,\xi,\lambda,\mu)\rightarrow (B,g')$ be a Riemannian submersion between Riemannian manifolds. The total manifold admits a hyperbolic Ricci soliton with the vertical potential field $\xi$ and vanishing fundamental tensors, then $(B,g')$ is an Einstein manifold if and only if $\nabla\xi$ is horizontal.
	\end{theorem}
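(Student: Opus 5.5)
We evaluate the soliton equation (\ref{2.16}) on horizontal vector fields and use the O'Neill-type formula (\ref{2.13}) for $\operatorname{Ric}(X,Y)$, in the same way as the previous theorem did on vertical pairs. For any $X,Y\in\Gamma(\mathcal{H})$, substituting (\ref{2.17}) and (\ref{2.18}) gives
\begin{align*}
\frac{1}{2}\Big[g(\nabla_\xi\nabla_X\xi,Y)+g(\nabla_\xi\nabla_Y\xi,X)-g(\nabla_{[\xi,X]}\xi,Y)-g(\nabla_{[\xi,Y]}\xi,X)+2g(\nabla_X\xi,\nabla_Y\xi)\Big]\\
+\lambda\Big[g(\nabla_X\xi,Y)+g(X,\nabla_Y\xi)\Big]+\operatorname{Ric}(X,Y)=\mu g(X,Y).
\end{align*}
By the earlier Lemma, vanishing $T$ and $A$ is equivalent to both distributions $\mathcal{V}$ and $\mathcal{H}$ being parallel. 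Consequently, for vertical $\xi$, every $\nabla_E\xi$ is vertical, so $g(\nabla_X\xi,Y)=0$. In (\ref{2.2}) and (\ref{2.3}) this reads $\nabla_X\xi=\mathcal{V}\nabla_X\xi$, since $A_X\xi=0$, and $T_\xi X=0$.

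Next we handle the second-derivative terms. Since $\nabla_X\xi$ is vertical and $\mathcal{V}$ is parallel, $\nabla_\xi\nabla_X\xi$ is again vertical, so $g(\nabla_\xi\nabla_X\xi,Y)=0$. Similarly $\nabla_{[\xi,X]}\xi$ is vertical, because it is a covariant derivative of the vertical field $\xi$ along an arbitrary direction; hence those terms vanish against the horizontal $Y$ too. The only survivor from $L_\xi L_\xi g$ is $g(\nabla_X\xi,\nabla_Y\xi)$. For (\ref{2.13}), $T=0$ gives $N=0$, and $A=0$ kills the $A$-terms, so $\operatorname{Ric}(X,Y)=\operatorname{Ric}'(X',Y')\circ\pi$. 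We therefore obtain
\begin{equation*}
\operatorname{Ric}'(X',Y')\circ\pi=\mu g'(X',Y')\circ\pi-g(\mathcal{V}\nabla_X\xi,\mathcal{V}\nabla_Y\xi).
\end{equation*}

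The conclusion is now read off from this identity. If $\nabla\xi$ is horizontal, meaning $\nabla_X\xi$ has no vertical part for horizontal $X$, then the last term vanishes and $B$ is Einstein with constant $\mu$. Conversely, if $B$ is Einstein, $\operatorname{Ric}'=cg'$, then $g(\mathcal{V}\nabla_X\xi,\mathcal{V}\nabla_X\xi)=(\mu-c)|X|^2$ for all horizontal $X$. To conclude $\mathcal{V}\nabla_X\xi=0$, one would need $c=\mu$, for instance from a trace or scalar-curvature comparison, or else simply read the statement the way the paper does, namely that the Einstein constant is $\mu$. This converse, together with the slightly inconsistent phrase ``$\nabla\xi$ is horizontal'' (which, given parallel $\mathcal{V}$, really forces $\nabla_X\xi=0$), is the main delicate point. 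I would state the result as $B$ being Einstein with constant $\mu$ exactly when $\nabla_X\xi=0$ for horizontal $X$. The rest is the routine bookkeeping with (\ref{2.1})–(\ref{2.4}) outlined above.
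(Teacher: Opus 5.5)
Your derivation is the paper's own. You make the same substitution of (\ref{2.17}), (\ref{2.18}) and (\ref{2.13}) on horizontal pairs and the same cancellations from $T\equiv0$, $A\equiv0$; you get them from the parallelism of $\mathcal V$ in the lemma quoted at the start of Section~3, the paper from (\ref{2.1})--(\ref{2.3}). You reach the same identity $g(\mathcal V\nabla_X\xi,\mathcal V\nabla_Y\xi)+\operatorname{Ric}'(X',Y')\circ\pi=\mu g(X,Y)$. The paper then simply reads ``if and only if'' off this identity. So both arguments actually establish only two things: the ``if'' direction, and your corrected converse that $B$ is Einstein \emph{with constant $\mu$} exactly when $\nabla_X\xi=0$ for all horizontal $X$.

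Your hesitation about the literal converse is well founded. No trace or scalar-curvature comparison can close it, because it is false. Take $B=\mathbb{R}^n$ and $F=S^3(\sqrt2)\times\cdots\times S^3(\sqrt2)$ with $n$ factors, so $\operatorname{Ric}_F=g_F$. Let $M=B\times F$ carry the product metric and let $\pi$ be the projection, so $T\equiv0$ and $A\equiv0$. Let $V_k$ be a unit Hopf (Killing) field on the $k$-th sphere factor, and set $\xi=\sum_k x_kV_k$, which is vertical. The $V_k$ are Killing, of unit length (hence geodesic), and live on different factors. From this one finds
\begin{equation*}
L_\xi g=\sum_k\big(dx_k\otimes V_k^\flat+V_k^\flat\otimes dx_k\big),\qquad L_\xi L_\xi g=2\sum_k dx_k\otimes dx_k,\qquad \operatorname{Ric}=g_F .
\end{equation*}
Hence (\ref{2.16}) holds with $\lambda=0$ and $\mu=1$. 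The base is flat, hence Einstein, yet $\nabla_{\partial_{x_k}}\xi=V_k\neq0$ is vertical.

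What your identity does give in general is $g(\mathcal V\nabla_X\xi,\mathcal V\nabla_Y\xi)=(\mu-c)g(X,Y)$. So an Einstein base forces one of two cases: either $c=\mu$ and $\nabla_X\xi=0$ on $\mathcal H$, or $c<\mu$ and $X\mapsto\nabla_X\xi$ is an injective homothety $\mathcal H\to\mathcal V$. In particular, the converse does hold when the fibre dimension $r$ is smaller than $n$.
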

	\begin{proof}
		The total space $M$ of Riemannian submersion $\pi$ admits a hyperbolic Ricci soliton  with the vertical potential field $\xi$, therefore for any $X,Y\in\Gamma(\mathcal{H})$ we have 
		\begin{equation}\label{3.5}
			\frac{1}{2}(L_\xi L_\xi g)(X,Y)+\lambda (L_\xi g)(X,Y)+\operatorname{Ric}(X,Y)=\mu g(X,Y).
		\end{equation} 
		Using (\ref{2.13}), (\ref{2.17}) and (\ref{2.18}) in (\ref{3.5}), we get
		\begin{align}\label{3.6}
			\frac{1}{2} \Big[g(\nabla_\xi \nabla_X \xi,Y)+g(\nabla_\xi \nabla_Y \xi,X)-g(\nabla_{[\xi,X]} \xi,Y) 
			-g(\nabla_{[\xi,Y]} \xi,X)+\\2g(\nabla_X \xi,\nabla_Y \xi)\Big]\notag+\lambda\Big[g(\nabla_X \xi,Y)+g(X,\nabla_Y \xi)\Big]
			+\operatorname{Ric}'(X',Y')\circ\pi\\+\notag\frac{1}{2}\Big[g(\nabla_X N,Y)+g(\nabla_Y N,X)\Big]
			-2\sum_{i=1}^{n}g(A_X X_i,A_Y X_i)\\-\sum_{j=1}^{r}g(T_{X_j} X,T_{X_j} Y)=\mu g(X,Y),\notag
		\end{align}
		where $X'$ and $Y'$ are $\pi$-related to $X$ and $Y$, respectively. Now, using (\ref{2.1}), (\ref{2.2}) and (\ref{2.3}) in (\ref{3.6}) and get
		\begin{align*}
			\frac{1}{2} \Big[ g(\mathcal{H}\nabla_\xi(A_X \xi),Y)+g(T_\xi(\mathcal{V}\nabla_X \xi),Y)+g(\mathcal{H}\nabla_\xi(A_Y \xi),X)+g(T_\xi(\mathcal{V}\nabla_Y \xi),X) \\ -g(T_{[\xi,X]}\xi,Y) - g(T_{[\xi,Y]}\xi,X)+2g(A_X \xi,A_Y \xi)+2g(\mathcal{V}\nabla_X \xi,\mathcal{V}\nabla_Y \xi)\Big] \\+\lambda\Big[g(A_X \xi,Y)+g(X,A_Y \xi)\Big]
			+\operatorname{Ric}'(X',Y')\circ\pi+\frac{1}{2}\Big[g(\nabla_X N,Y)\\+g(\nabla_Y N,X)\Big] 
			-2\sum_{i=1}^{n}g(A_X X_i,A_Y X_i)-\sum_{j=1}^{r}g(T_{X_j} X,T_{X_j} Y)=\mu g(X,Y).
		\end{align*}
		Since  fundamental tensors vanish identically on the manifold,  the above equation yields
		\begin{align*}
			g(\mathcal{V}\nabla_X \xi,\mathcal{V}\nabla_Y \xi)+\operatorname{Ric}'(X',Y')\circ\pi=\mu g(X,Y),
		\end{align*}  
		where the horizontal vector fields $X,Y$ are $\pi$-related to $X',Y'$ respectively,
		which is equivalent to
		\begin{align*}
			g(\mathcal{V}\nabla_X \xi,\mathcal{V}\nabla_Y \xi)+	\operatorname{Ric}'(X',Y')=\mu g'(X',Y').
		\end{align*}
		Hence $(B,g')$ is an Einstein manifold if and only if $\nabla \xi$ is horizontal.
	\end{proof}
	\begin{theorem}
		Let $\pi:(M,g,\eta,\lambda,\mu)\rightarrow (B,g')$ be a Riemannian submersion between Riemannian manifolds. The total manifold admits a hyperbolic Ricci soliton with the horizontal potential field $\eta$ and vanishing fundamental tensors, then	$(B,g')$ admits hyperbolic Ricci soliton with potential field $\eta'$.  Moreover, if $\eta$ is a unit vector field on the base manifold, then the rate of change of corresponding soliton on the base manifold is 
		 \textit{shrinking} if ${\tau'}<0$,
	\textit{steady} if ${\tau'}=0$, and
	 \textit{expanding} if ${\tau'}>0$.
	\end{theorem}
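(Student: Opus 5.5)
We follow the pattern of the proof of the first theorem of this section, now restricting the soliton equation to horizontal vectors. For $X,Y\in\Gamma(\mathcal{H})$, the soliton equation (\ref{2.16}) with potential $\eta$ reads
\begin{equation*}
\frac{1}{2}(L_\eta L_\eta g)(X,Y)+\lambda (L_\eta g)(X,Y)+\operatorname{Ric}(X,Y)=\mu g(X,Y).
\end{equation*}
We expand the two Lie derivative terms with (\ref{2.17}) and (\ref{2.18}), and replace $\operatorname{Ric}(X,Y)$ by (\ref{2.13}). Since $T\equiv0$ and $A\equiv0$, the earlier lemma shows that $\mathcal{H}$ is parallel. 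Moreover, $N=0$, so the term $\frac12[g(\nabla_XN,Y)+g(\nabla_YN,X)]$ vanishes, and the $A$- and $T$-sums in (\ref{2.13}) disappear. This leaves $\operatorname{Ric}(X,Y)=\operatorname{Ric}'(X',Y')\circ\pi$.

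Because $\mathcal{H}$ is parallel and $\eta$ is horizontal, (\ref{2.4}) with $A=0$ gives $\nabla_X\eta=\mathcal{H}\nabla_X\eta$, and iterated derivatives $\nabla_\eta\nabla_X\eta$ are horizontal as well. The bracket $[\eta,X]$ is horizontal, being $\nabla_\eta X-\nabla_X\eta$ with both terms horizontal. We take $\eta$ and $X,Y$ basic, with $\eta$ $\pi$-related to $\eta'$; this is the implicit assumption behind ``potential field $\eta'$''. Property (iii) of basic vector fields then identifies $\mathcal{H}\nabla_X\eta$ with $\nabla'_{X'}\eta'$, and property (ii) shows that $\mathcal{H}[\eta,X]$ is $\pi$-related to $[\eta',X']$. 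Applying (iii) again yields $g(\nabla_\eta\nabla_X\eta,Y)=g'(\nabla'_{\eta'}\nabla'_{X'}\eta',Y')\circ\pi$, and similarly for the remaining terms. Property (i) converts every inner product, including $g(\nabla_X\eta,\nabla_Y\eta)$, to $g'$. Each term is therefore the pullback of the corresponding term on $B$, and we obtain
\begin{equation*}
\frac{1}{2}(L_{\eta'} L_{\eta'} g')(X',Y')+\lambda (L_{\eta'} g')(X',Y')+\operatorname{Ric}'(X',Y')=\mu g'(X',Y'),
\end{equation*}
so $(B,g',\eta',\lambda,\mu)$ is a hyperbolic Ricci soliton.

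For the second assertion, we trace the last equation over an orthonormal frame $\{X'_i\}$ of $B$, as was done for the fibers. Following the first theorem, the unit-length hypothesis on $\eta'$ is used to kill the contribution of the Lie derivative terms in the trace, which gives $\tau'=n\mu$. Hence $\mu$ has the sign of $\tau'$, and the rate of change is shrinking, steady or expanding according as $\tau'<0$, $\tau'=0$ or $\tau'>0$.

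The main obstacle is the identification in the second paragraph. The second-order term $\nabla_\eta\nabla_X\eta$ and the bracket terms must descend to $B$, and this needs $\eta$ basic rather than merely horizontal. I would state that assumption explicitly. The trace step is the other weak point, since unit length alone does not obviously make $\operatorname{tr}(L_{\eta'}L_{\eta'}g')+2\lambda\operatorname{div}\eta'$ vanish. There I would follow the paper's convention from the fiber theorem, and if necessary add that $\eta'$ is Killing, so that both Lie derivative terms vanish.
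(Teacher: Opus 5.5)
Your argument for the first assertion is the paper's argument. You restrict to $\mathcal{H}$, use $T\equiv0$, $A\equiv0$ (so $N=0$) to reduce to $\nabla^{\ast}$ and $\operatorname{Ric}'$, and push everything down with O'Neill's properties (i)--(iii) of basic fields. Your explicit requirement that $\eta$ (and $X,Y$) be basic is what the paper uses without saying so, when it declares $\nabla^{\ast}_\eta\nabla^{\ast}_X\eta$, $\nabla^{\ast}_{[\eta,X]}\eta$, etc.\ basic and $\pi$-related to their counterparts on $B$. That part is fine.

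The trace step is a genuine gap, and it cannot be closed by ``following the paper's convention.'' The paper's proof does exactly what you propose: it asserts $\tau'=n\mu$ only because $\eta$ is unit, and this is unjustified. Unit length gives $(L_{\eta'}g')(\eta',\eta')=\eta'(g'(\eta',\eta'))=0$. From (\ref{2.18}) it also gives $(L_{\eta'}L_{\eta'}g')(\eta',\eta')=2g'(\nabla'_{\eta'}\nabla'_{\eta'}\eta',\eta')+2\|\nabla'_{\eta'}\eta'\|^2=0$; this follows by differentiating $g'(\nabla'_{\eta'}\eta',\eta')=0$ along $\eta'$. Hence $\operatorname{Ric}'(\eta',\eta')=\mu$. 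It gives no control over $\operatorname{tr}(L_{\eta'}g')=2\operatorname{div}\eta'$ or over $\operatorname{tr}(L_{\eta'}L_{\eta'}g')$.

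In fact the ``moreover'' clause is false. Take $B=\{(r,x,y):r>0\}$ with $g'=dr^2+r^2(dx^2+dy^2)$ and the unit field $\eta'=\partial_r$. Planes containing $\partial_r$ are flat and $K(\partial_x,\partial_y)=-1/r^2$, so
\[
L_{\eta'}g'=2r(dx^2+dy^2),\qquad L_{\eta'}L_{\eta'}g'=2(dx^2+dy^2),\qquad \operatorname{Ric}'=-(dx^2+dy^2).
\]
The soliton equation therefore holds with $\lambda=\mu=0$, while $\tau'=-2/r^2<0$. Now let $M=\mathbb{R}\times B$ with the product metric $dt^2+g'$, let $\pi$ be the projection, and let $\eta$ be the lift of $\partial_r$. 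Then $T=A=0$, and $\eta$ is horizontal, basic and unit. The Lie-derivative and Ricci tensors of $M$ are the pull-backs of those above, with no $dt$-component. Since $\mu=0$, the soliton equation on $M$ is just the one on $B$. So $(M,g,\eta,0,0)$ is a hyperbolic Ricci soliton satisfying every hypothesis, and the induced soliton on $B$ is steady in rate although $\tau'<0$.

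So the second assertion has no proof as stated. Adding that $\eta'$ is Killing does give $\tau'=n\mu$, but only because then $\operatorname{Ric}'=\mu g'$. That turns the theorem into ``the base is Einstein,'' and the unit-length hypothesis plays no role. Under the given hypotheses, the correct statement is that the rate is shrinking, steady or expanding according to the sign of $\operatorname{Ric}'(\eta',\eta')$. The fiber theorem you want to imitate has the same defect: $M=B\times\mathbb{R}\to\mathbb{R}$ with vertical $\xi=\partial_r$ gives $\widehat{\tau}=-2/r^2$ with $\mu=0$. So there is no valid convention to borrow.
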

	\begin{proof}
		Since the total space $(M,g)$ of Riemannian submersion $\pi$ admits a hyperbolic Ricci soliton  with the horizontal potential field $\eta$, 	for any vector fields $X,Y\in\Gamma(\mathcal{H})$ we have 
		\begin{equation}\label{3.7}
			\frac{1}{2}(L_\eta L_\eta g)(X,Y)+\lambda (L_\eta g)(X,Y)+\operatorname{Ric}(X,Y)=\mu g(X,Y).
		\end{equation} 
		Using (\ref{2.17}) and (\ref{2.18}) in (\ref{3.7}), we get
		\begin{align*}
			\frac{1}{2} \Big[g(\nabla_\eta \nabla_X \eta,Y)+g(\nabla_\eta \nabla_Y \eta,X)-g(\nabla_{[\eta,X]} \eta,Y) 
			-g(\nabla_{[\eta,Y]} \eta,X)\\+2g(\nabla_X \eta,\nabla_Y \eta)\Big]+\lambda\Big[g(\nabla_X \eta,Y)+g(X,\nabla_Y \eta)\Big]\notag
			\\	+\operatorname{Ric}(X,Y)=\mu g(X,Y).\notag
		\end{align*}
		Now, using (\ref{2.2}), (\ref{2.3}) and (\ref{2.4}) in the above equation, we get
		\begin{align}\label{3.8}
			\frac{1}{2} \Big[g(\nabla^{\ast}_\eta \nabla^{\ast}_X \eta,Y)+g(A_\eta A_X \eta,Y) +g(\nabla^{\ast}_\eta \nabla^{\ast}_Y \eta,X)+g(A_\eta A_Y \eta,X) 
			-\\ g(\mathcal{H}\nabla_{[\eta,X]}\eta,Y)\notag- g(\mathcal{H}\nabla_{[\eta,Y]}\eta,X)
			+2 g(\nabla^{\ast}_X \eta,\nabla^{\ast}_Y \eta)+2g(A_X \eta,A_Y \eta)\Big] \\+\lambda\Big[g(\nabla_X \eta,Y)+g(X,\nabla_Y \eta)\Big]
			+\operatorname{Ric}(X,Y)=\mu g(X,Y).\notag
		\end{align}
		Again, using (\ref{2.13}) in (\ref{3.8}) we obtain
		\begin{align*}
			\frac{1}{2} \Big[g(\nabla^{\ast}_\eta \nabla^{\ast}_X \eta,Y)+g(A_\eta A_X \eta,Y) +g(\nabla^{\ast}_\eta \nabla^{\ast}_Y \eta,X)+g(A_\eta A_Y \eta,X)\\ 
			- g(\mathcal{H}\nabla_{[\eta,X]}\eta,Y) - g(\mathcal{H}\nabla_{[\eta,Y]}\eta,X)
			+2 g(\nabla^{\ast}_X \eta,\nabla^{\ast}_Y \eta)+2g(A_X \eta,A_Y \eta)\Big]\\ +\lambda\Big[g(\nabla_X \eta,Y)+g(X,\nabla_Y \eta)\Big]
			+\operatorname{Ric}'(X',Y')\circ\pi+\frac{1}{2}\Big[ g(\nabla_X N,Y)\\+g(\nabla_Y N,X)\Big]
			-2\sum_{i=1}^{n}g(A_X X_i,A_Y X_i)-\sum_{j=1}^{r}g(T_{X_j} X,T_{X_j} Y)=\mu g(X,Y),
		\end{align*}
		where $X'$ and $Y'$ are $\pi$-related to $X$ and $Y$ respectively, for any $X,Y \in \Gamma(\mathcal{H})$ and we denote $\mathcal{H}\nabla_X Y=\nabla^{\ast}_X Y$.
		Since  fundamental tensors vanish identically on the manifold (i.e. $T\equiv0, A\equiv0$),  the preceding equation yields
		\begin{align}\label{3.9}
			\frac{1}{2} \Big[g(\nabla^{\ast}_\eta \nabla^{\ast}_X \eta,Y)+g(\nabla^{\ast}_\eta \nabla^{\ast}_Y \eta,X)-g(\nabla^{\ast}_{[\eta,X]}\eta,Y)-g(\nabla^{\ast}_{[\eta,Y]}\eta,X)\\+ 2g(\nabla^{\ast}_X \eta,\nabla^{\ast}_Y \eta)\Big]+ \lambda\Big[g(\nabla^{\ast}_X \eta,Y)+g(X,\nabla^{\ast}_Y \eta)\Big]\notag\\+\operatorname{Ric}'(X',Y')\circ\pi=\mu g(X,Y).\notag
		\end{align}
		On the other hand, $\nabla^{\ast}_\eta \nabla^{\ast}_X \eta,\nabla^{\ast}_\eta \nabla^{\ast}_Y \eta,\nabla^{\ast}_{[\eta,X]}\eta,\nabla^{\ast}_{[\eta,Y]}\eta,\nabla^{\ast}_X \eta$ and $\nabla^{\ast}_Y \eta$
		are the basic vector fields $\pi$-related to  $\nabla'_{\eta'} \nabla'_{X'} \eta',\nabla'_{\eta'} \nabla'_{Y'} \eta',\nabla'_{[\eta',X']} \eta',\nabla'_{[\eta',Y']} \eta',\nabla'_{X'} \eta'$ and $\nabla'_{Y'} \eta'$ respectively. Thus from (\ref{3.9}) we get
		\begin{align*}
			\frac{1}{2}(L_{\eta'} L_{\eta'} g')(X',Y')\circ\pi+\lambda (L_{\eta'} g')(X',Y')\circ\pi+ \operatorname{Ric}'(X',Y')\circ\pi=\mu g'(X',Y')\circ\pi,
		\end{align*}
		which is equivalent to
		\begin{equation}\label{brb}
			\frac{1}{2}(L_{\eta'} L_{\eta'} g')(X',Y')+\lambda (L_{\eta'} g')(X',Y')+\operatorname{Ric}'(X',Y')=\mu g'(X',Y').
		\end{equation}
		Thus the proof is completed.  Since $\eta$ is a unit vector field on the base manifold, taking the trace of \eqref{brb} yields
		${\tau'}=n\mu.$
		Consequently, the rate of change of soliton on the base manifold is classified as {shrinking}, {steady}, or {expanding} according as
		${\tau'}<0,\,
	{\tau'}=0,\,
		\text{or}\,
		{\tau'}>0$, respectively.
	\end{proof}
	\begin{remark}
		Let $\pi:(M,g,\eta,\lambda,\mu)\rightarrow (B,g')$ be a Riemannian submersion between Riemannian manifolds. The total manifold admits a hyperbolic Ricci soliton with the horizontal potential field $\eta$ and vanishing fundamental tensors, then the vector field $N$ is killing on the horizontal distribution $\mathcal{H}$.
	\end{remark}
	\begin{corollary}
		Let $\pi:(M,g,\eta,\lambda,\mu)\rightarrow (B,g')$ be a Riemannian submersion from Riemannian manifold to an Einstein manifold. The total manifold admits a hyperbolic Ricci soliton with the horizontal potential field $\eta$ and vanishing fundamental tensors. If the  vector field $\eta$ is $2$-killing then it is conformal on the horizontal distribution $\mathcal{H}$. 
	\end{corollary}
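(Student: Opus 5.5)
We use the horizontal part of the soliton equation obtained in the proof of the preceding theorem. When $T\equiv0$ and $A\equiv0$, equation (\ref{3.9}) holds for all $X,Y\in\Gamma(\mathcal{H})$, and this equation passes to the base as (\ref{brb}):
\begin{equation*}
\frac{1}{2}(L_{\eta'} L_{\eta'} g')(X',Y')+\lambda (L_{\eta'} g')(X',Y')+\operatorname{Ric}'(X',Y')=\mu g'(X',Y').
\end{equation*}
We then add the two hypotheses one at a time. Since $(B,g')$ is Einstein, $\operatorname{Ric}'=\kappa g'$ for some function $\kappa$, which is constant when $n\geq3$. The $2$-Killing condition on $\eta$ says that the second Lie derivative vanishes, $L_\eta L_\eta g=0$. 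Restricted to horizontal vectors with $T=A=0$, the terms of (\ref{2.18}) involve only $\nabla^\ast$. They are therefore $\pi$-related to the corresponding terms on $B$, so $L_{\eta'}L_{\eta'}g'=0$ as well.

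Substituting both facts into (\ref{brb}) gives
\begin{equation*}
\lambda (L_{\eta'} g')(X',Y')=(\mu-\kappa)g'(X',Y').
\end{equation*}
Pulling this back by $\pi$ and using $(L_\eta g)(X,Y)=g(\nabla^\ast_X\eta,Y)+g(X,\nabla^\ast_Y\eta)$ (valid because $A=0$), we get, for $\lambda\neq0$,
\begin{equation*}
(L_\eta g)(X,Y)=\frac{\mu-\kappa}{\lambda}\,g(X,Y),\qquad X,Y\in\Gamma(\mathcal{H}).
\end{equation*}
This is exactly the statement that $\eta$ is conformal on $\mathcal{H}$, with $\sigma=\frac{\mu-\kappa}{2\lambda}$. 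The factor $\sigma$ is constant, so $\eta$ is in fact homothetic, whenever $\kappa$ is constant.

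The main obstacle is the degenerate case $\lambda=0$, which the argument must address. In that case the relation above forces $\mu=\kappa$ and gives no information about $L_\eta g$. We would therefore either add the tacit assumption $\lambda\neq0$ or note that conformality cannot be concluded without it. A second point needing care is the horizontal trace of the $2$-Killing condition: one must check that the terms $g(\nabla_{[\eta,X]}\eta,Y)$ with vertical parts of $[\eta,X]$ really drop out. This follows because $T=0$ and $A=0$ make $\nabla_U\eta$ vertical-free, so we rely on the same reductions that led to (\ref{3.9}).
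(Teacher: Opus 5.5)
Your argument is correct within the paper's framework, but it makes a round trip through the base that the paper's proof avoids.

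The paper stays on $M$. The hypothesis $L_\eta L_\eta g=0$ removes the second-order term from (\ref{3.7}) at once. Then (\ref{2.13}) with $T\equiv0$ (so $N=0$) and $A\equiv0$ replaces $\operatorname{Ric}(X,Y)$ by $\operatorname{Ric}'(X',Y')\circ\pi=(\kappa\circ\pi)\,g(X,Y)$. This gives $\lambda(L_\eta g)(X,Y)=(\mu-\kappa\circ\pi)\,g(X,Y)$ on $\mathcal{H}$ directly.

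Your detour through (\ref{brb}) costs more:
\begin{itemize}
\item it needs $\eta$ to project to a vector field $\eta'$ on $B$, i.e.\ to be basic, which the corollary does not state;
\item it needs the $2$-Killing condition to be transported down to $B$;
\item it then needs $L_{\eta'}g'$ to be pulled back again.
\end{itemize}
None of this is required. With $T=A=0$, the bracket in (\ref{3.9}) is literally $(L_\eta L_\eta g)(X,Y)$, so you could have set it to zero there and stopped.

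Your ``second point needing care'' is also an artifact of the detour, and the reason you give is not the relevant one. $T=0$ makes $\nabla_U\eta$ horizontal, which is exactly the component that pairs with the horizontal $Y$. The term actually disappears because, for horizontal $\eta$ and $X$, $\mathcal{V}[\eta,X]=2A_\eta X=0$, so $[\eta,X]$ has no vertical part at all.

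On the positive side, your write-up is more careful than the paper's in two respects. You identify $\sigma=\frac{\mu-\kappa}{2\lambda}$ explicitly. You also note that the conclusion requires $\lambda\neq0$; the paper's proof uses this tacitly when it passes from $\lambda(L_\eta g)+\operatorname{Ric}'\circ\pi=\mu g$ to conformality.
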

	\begin{proof}
		Since the total space $(M,g)$ of Riemannian submersion $\pi$ admits a hyperbolic Ricci soliton and vector field $\eta$ is $2$-killing, from (\ref{3.7}) we have
		\begin{align}
			\lambda(L_{\eta} g)(X,Y)
			+\operatorname{Ric}(X,Y)=\mu g(X,Y).\notag
		\end{align}
		Using (\ref{2.13}) in the above equation we get
		\begin{align}
			\lambda(L_{\eta} g)(X,Y)
			+\operatorname{Ric}'(X',Y')\circ\pi+ \notag\frac{1}{2}\Big[g(\nabla_X N,Y)+g(\nabla_Y N,X)\Big]\\
			-2\sum_{i=1}^{n}g(A_X X_i,A_Y X_i)-\sum_{j=1}^{r}g(T_{X_j} X,T_{X_j} Y)=\mu g(X,Y).\notag
		\end{align}
		Since fundamental tensors vanish identically on the manifold, the above equation yields
		\begin{align}
			\lambda(L_{\eta} g)(X,Y) +\operatorname{Ric}'(X',Y')\circ\pi =\mu g(X,Y).\notag
		\end{align}
		Because the base manifold $(B,g')$ is Einstein, the preceding equation implies that the vector field $\eta$ is conformal.
	\end{proof}
	\begin{corollary}
		Let $\pi:(M,g)\rightarrow (B,g')$ be a Riemannian submersion between Riemannian manifolds. If $(B,g')$ admits a hyperbolic Ricci soliton with potential field $\eta'$ which is conformal on the horizontal distribution $\mathcal{H}$ then $(B,g')$ is an Einstein.
	\end{corollary}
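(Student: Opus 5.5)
The plan is to work entirely on the base manifold, using the hyperbolic Ricci soliton equation \eqref{2.16} for $(B,g',\eta',\lambda,\mu)$. For any $X',Y'\in\Gamma(TB)$ this reads
\begin{equation*}
\frac{1}{2}(L_{\eta'}L_{\eta'}g')(X',Y')+\lambda(L_{\eta'}g')(X',Y')+\operatorname{Ric}'(X',Y')=\mu g'(X',Y').
\end{equation*}
The hypothesis is that $\eta'$ is conformal, i.e. $L_{\eta'}g'=2\sigma g'$ for a smooth function $\sigma$ on $B$. Pulled back via the basic horizontal lift $\eta$, this is the statement that $\eta$ is conformal on $\mathcal{H}$. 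The basic fields $\mathcal{H}\nabla_X Y$ are $\pi$-related to $\nabla'_{X'}Y'$, so the two formulations are interchangeable. This is exactly the correspondence used in the proof of the preceding theorem.

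The key step is to compute the second Lie derivative $L_{\eta'}L_{\eta'}g'$ in terms of $\sigma$. Since $L_{\eta'}$ is a derivation commuting with contraction,
\begin{equation*}
L_{\eta'}L_{\eta'}g'=L_{\eta'}(2\sigma g')=2(\eta'\sigma)g'+2\sigma L_{\eta'}g'=2\big(\eta'(\sigma)+2\sigma^2\big)g'.
\end{equation*}
Substituting into the soliton equation gives
\begin{equation*}
\operatorname{Ric}'(X',Y')=\big(\mu-\eta'(\sigma)-2\sigma^2-2\lambda\sigma\big)g'(X',Y').
\end{equation*}
Thus $\operatorname{Ric}'=\rho g'$ with $\rho=\mu-\eta'(\sigma)-2\sigma^2-2\lambda\sigma$. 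One could instead check this against the explicit formula \eqref{2.18}, but the derivation route is shorter.

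The main obstacle is the word ``Einstein'': one must ensure $\rho$ is constant. If $\dim B=n\ge 3$, Schur's lemma (via the contracted Bianchi identity) forces any $\rho$ with $\operatorname{Ric}'=\rho g'$ to be constant, and we are done. In the case $n=2$, the conclusion can only mean $\operatorname{Ric}'$ is pointwise proportional to $g'$, which always holds. Alternatively, one adopts the convention, as in the earlier corollary, that $\sigma$ is constant (homothetic), in which case $\rho$ is manifestly constant. I would present the general computation, then invoke Schur for $n\ge3$, remarking on the homothetic case for completeness.
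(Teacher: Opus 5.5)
Your proposal is correct and follows the same route as the paper: substitute the conformal condition into the soliton equation on $B$ and read off $\operatorname{Ric}'$ as a multiple of $g'$. The one difference is that the paper writes $\frac{1}{2}L_{\eta'}L_{\eta'}g'=2\mu_1^2 g'$, which silently treats the conformal factor $\mu_1$ as constant (the homothetic case), whereas you correctly keep the $\eta'(\sigma)$ term and then handle constancy of the factor by Schur's lemma for $n\ge 3$; under the paper's own convention in \eqref{4.2}, where Einstein only means $\operatorname{Ric}=ag$ for a smooth function $a$, that last step is not even needed.
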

	\begin{proof}
		Since $(B,g')$ admits a hyperbolic Ricci soliton  with the potential field $\eta'$, for any $X',Y'\in \Gamma(TB)$ we have
		\begin{equation*}
			\frac{1}{2}(L_{\eta'} L_{\eta'} g')(X',Y')+\lambda (L_{\eta'} g')(X',Y')+\operatorname{Ric}'(X',Y')=\mu g'(X',Y').
		\end{equation*} 
		Since $\eta$ is conformal, from the above equation we get 
		\begin{equation*}
			2 \mu_1^2 g'(X',Y')+2\lambda \mu_1 g'(X',Y')+\operatorname{Ric}'(X',Y')=\mu g'(X',Y'),
		\end{equation*}
		which implies
		\begin{equation*}
			\operatorname{Ric}'(X',Y')=(\mu-2 \mu_1^2 - 2\lambda\mu_1)g'(X',Y').
		\end{equation*}
		Hence, $(B,g')$ is an Einstein.
	\end{proof}
	\begin{theorem}
		Let $\pi:(M,g,\eta,\lambda,\mu)\rightarrow (B,g')$ be a Riemannian submersion between Riemannian manifolds. The total manifold admits a hyperbolic Ricci soliton with the horizontal potential field $\eta$ and vanishing fundamental tensors. If  $\nabla \eta$ is vertical, then  the scalar curvature of any fiber is given by $\mu r$.
	\end{theorem}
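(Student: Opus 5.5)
The plan is to restrict the soliton equation \eqref{2.16} to vertical vector fields and reuse the reduction from the proof of the theorem above, where the horizontal potential field $\eta$ and $T\equiv0$, $A\equiv0$ were assumed. For $U,W\in\Gamma(\mathcal{V})$ we already have \eqref{3.3}. Setting the fundamental tensors to zero there gives
\begin{equation*}
g(\mathcal{H}\nabla_U\eta,\mathcal{H}\nabla_W\eta)+\widehat{\operatorname{Ric}}(U,W)=\mu g(U,W).
\end{equation*}
This is obtained exactly as in the previous proof: all terms containing $T$ or $A$ vanish. The mean curvature term $g(\nabla_{X_i}H,X_i)$ also vanishes, since $N=\sum_j T_{U_j}U_j=0$ when $T\equiv0$.

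Next use the hypothesis that $\nabla\eta$ is vertical, so $\mathcal{H}\nabla_U\eta=0$ for every vertical $U$. The first term then drops out and we obtain
\begin{equation*}
\widehat{\operatorname{Ric}}(U,W)=\mu g(U,W)
\end{equation*}
for all $U,W\in\Gamma(\mathcal{V})$. In other words, each fiber is Einstein with constant $\mu$, which agrees with the previous theorem.

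Finally, take the trace over a local orthonormal frame $\{U_j\}_{1\le j\le r}$ of $\mathcal{V}$, which restricts to an orthonormal frame of the fiber:
\begin{equation*}
\widehat{\tau}=\sum_{j=1}^{r}\widehat{\operatorname{Ric}}(U_j,U_j)=\mu\sum_{j=1}^{r}g(U_j,U_j)=\mu r.
\end{equation*}

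The only delicate step is justifying the reduction of \eqref{3.3}. One must check that with $A\equiv0$ and $T\equiv0$ all second-derivative terms of $L_\eta L_\eta g$ on vertical arguments really vanish, leaving only the quadratic term $2g(\nabla_U\eta,\nabla_W\eta)$ restricted to its horizontal part, and that the $\lambda$ term vanishes because $g(\nabla_U\eta,W)=g(T_U\eta,W)=0$. Since this bookkeeping was already carried out in the proof of the previous theorem, I will cite it rather than redo it. After that, everything is immediate.
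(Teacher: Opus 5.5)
Your proposal is correct and follows essentially the same route as the paper: restrict the soliton equation to vertical arguments, expand $\operatorname{Ric}(U,W)$ via \eqref{2.12}, discard every term containing $T$ or $A$, use that $\nabla\eta$ is vertical, and trace over $\{U_j\}$. The only difference is the order of steps: you first obtain the pointwise identity $\widehat{\operatorname{Ric}}=\mu g$ on the fibers and then trace, whereas the paper traces \eqref{3.10} first and simplifies afterwards. (A side remark: with $T\equiv0$, $A\equiv0$ and $\eta$ horizontal, $\nabla\eta$ is automatically horizontal, so the hypothesis forces $\nabla\eta=0$ and $M$ is Einstein with constant $\mu$, which makes the conclusion immediate.)
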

	\begin{proof}
		Since the total space $(M,g)$ admits a hyperbolic Ricci soliton  with the horizontal potential field $\eta$, for any $U,W\in\Gamma(\mathcal{V})$ we have
		\begin{equation*}
			\frac{1}{2}(L_\eta L_\eta g)(U,W)+\lambda (L_\eta g)(U,W)+\operatorname{Ric}(U,W)=\mu g(U,W).
		\end{equation*} 
		Using (\ref{2.17}) and (\ref{2.18}) in the above equation, we get
		\begin{align*}
			\frac{1}{2} \Big[g(\nabla_\eta \nabla_U \eta,W)+g(\nabla_\eta \nabla_W \eta,U)-g(\nabla_{[\eta,U]} \eta,W) 
			-g(\nabla_{[\eta,W]} \eta,U)\\\notag+2g(\nabla_U \eta,\nabla_W \eta)\Big] +\lambda\Big[g(\nabla_U \eta,W)+g(U,\nabla_W \eta)\Big]\\
			+\operatorname{Ric}(U,W)=\mu g(U,W).
		\end{align*}
		Again using (\ref{2.12}) in the preceding equation, we obtain
		\begin{align}\label{3.10}
			\frac{1}{2} \Big[g(\nabla_\eta \nabla_U \eta,W)+g(\nabla_\eta \nabla_W \eta,U)-g(\nabla_{[\eta,U]} \eta,W) 
			-g(\nabla_{[\eta,W]} \eta,U)\\\notag+2g(\nabla_U \eta,\nabla_W \eta)\Big] +\lambda\Big[g(\nabla_U \eta,W)+g(U,\nabla_W \eta)\Big]
			+\widehat{\operatorname{Ric}}(U,W)\\-\sum_{j=1}^{r}g(T_U W,T_{U_j}{U_j})+\notag
			\sum_{i=1}^{n}\Big[g(\nabla_{X_i} H,X_i)g(U,W)\\
			+g(A_{X_i}U,A_{X_i}W)\Big]=\mu g(U,W).\notag
		\end{align}
		By tracing (\ref{3.10}), we obtain
		\begin{align*}
			\sum_{j=1}^{r}\Big[ g(\nabla_\eta \nabla_{U_j} \eta,{U_j})-g(\nabla_{[\eta,{U_j}]} \eta,{U_j}) 
			+g(\nabla_{U_j} \eta,\nabla_{U_j} \eta)+2\lambda g(\nabla_{U_j} \eta,{U_j})\\
			+\widehat{\operatorname{Ric}}({U_j},{U_j})-g(T_{{U_j}} {U_j},N) +\sum_{i=1}^{n}\Big[g((\nabla_{X_i}T)_{{U_j}} {U_j},X_i)\\+g(A_{X_i} {U_j},A_{X_i} {U_j})\Big]\Big]=\mu\sum_{j=1}^{r} g({U_j},{U_j}), 
		\end{align*}
		where $\{U_j\}$ is an orthonormal frame on vertical distribution. Using (2.2), (2.3) and (2.4) in the above equation, we get 
		\begin{align*}
			\sum_{j=1}^{r} \Big[ g(A_\eta (\mathcal{H}\nabla_{U_j} \eta),U_j) +g(\mathcal{V}\nabla_\eta (T_{U_j} \eta),U_j) 
			-g( T_{[\eta,{U_j}]}\eta,{U_j})-2\lambda g(N,\eta)\\
			+g(T_{U_j} \eta,T_{U_j} \eta)+g(\mathcal{H}\nabla_{U_j} \eta,\mathcal{H}\nabla_{U_j} \eta)
			+\widehat{\operatorname{Ric}}({U_j},{U_j})-g(T_{{U_j}} {U_j},N)\\ +\sum_{i=1}^{n}\Big[g((\nabla_{X_i}T)_{{U_j}} {U_j},X_i)+g(A_{X_i} {U_j},A_{X_i} {U_j})\Big]\Big]=\mu\sum_{j=1}^{r} g({U_j},{U_j}). 
		\end{align*}
		Since fundamental tensors vanish identically on the manifold and $\nabla \eta$ is vertical, the preceding equation yields 	$\widehat{\tau}=\mu r$, which completes the proof.
	\end{proof}
\begin{theorem}
	Let $\pi:(M,g,\xi,\lambda,\mu)\rightarrow (B,g')$ be a Riemannian submersion between Riemannian manifolds. The total manifold admits a hyperbolic Ricci soliton with the vertical potential field $\xi$ and vanishing fundamental tensors. If  $\nabla \xi$ is horizontal, then  the scalar curvature of base manifold is given by $\mu n$.
		\end{theorem}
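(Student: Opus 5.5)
The argument mirrors the proof of the theorem on the scalar curvature of the fibers, with the roles of the vertical and horizontal distributions interchanged. The key relation is already in the proof of the theorem stating that $(B,g')$ is Einstein if and only if $\nabla\xi$ is horizontal. There, starting from (\ref{3.5}) and using (\ref{2.13}), (\ref{2.17}), (\ref{2.18}) together with (\ref{2.1})--(\ref{2.3}) and $T\equiv 0$, $A\equiv 0$, we obtained, for all $X,Y\in\Gamma(\mathcal{H})$,
\begin{align*}
g(\mathcal{V}\nabla_X \xi,\mathcal{V}\nabla_Y \xi)+\operatorname{Ric}'(X',Y')\circ\pi=\mu g(X,Y).
\end{align*}
I will take this identity as the starting point rather than redo the expansion. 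For completeness I will briefly recall why all the other terms drop out:
\begin{itemize}
\item For vertical $\xi$ and horizontal $X$, $\mathcal{H}\nabla_X\xi=A_X\xi$, which vanishes.
\item The terms $g(\nabla_\xi\nabla_X\xi,Y)$ reduce to $g(\mathcal{H}\nabla_\xi(A_X\xi),Y)+g(T_\xi(\mathcal{V}\nabla_X\xi),Y)$, which also vanish.
\item The bracket terms become $T$-terms, and the $N$-terms in (\ref{2.13}) vanish since $N=\sum_j T_{U_j}U_j=0$.
\end{itemize}

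Next I use the hypothesis that $\nabla\xi$ is horizontal, so $\mathcal{V}\nabla_X\xi=0$ for every horizontal $X$. The identity then collapses to $\operatorname{Ric}'(X',Y')\circ\pi=\mu\, g(X,Y)=\mu\, g'(X',Y')\circ\pi$, that is, $\operatorname{Ric}'=\mu g'$ on $B$.

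Finally I trace over a local orthonormal basis $\{X_i\}_{1\le i\le n}$ of $\mathcal{H}$, consisting of basic fields $\pi$-related to an orthonormal frame $\{X_i'\}$ on $B$. Since $\pi_\ast$ is an isometry on horizontal vectors, this gives $\tau'\circ\pi=\sum_i\operatorname{Ric}'(X_i',X_i')\circ\pi=\mu\sum_i g(X_i,X_i)=\mu n$. Hence $\tau'=\mu n$, since $\pi$ is surjective.

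The only delicate point is confirming that, under $T\equiv0$ and $A\equiv0$ with $\xi$ vertical, every second-Lie-derivative term in (\ref{2.18}) is either of the form $T(\cdot)$ or $A(\cdot)$, or is the quadratic term $g(\mathcal{V}\nabla_X\xi,\mathcal{V}\nabla_Y\xi)$; the horizontality of $\nabla\xi$ then kills the latter. In particular, the bracket terms $g(\nabla_{[\xi,X]}\xi,Y)$ need care. Here $[\xi,X]$ may have both components. Its vertical part contributes $g(T_{\mathcal{V}[\xi,X]}\xi,Y)=0$, and its horizontal part contributes $g(A_{\mathcal{H}[\xi,X]}\xi,Y)=0$. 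I expect this bookkeeping to be the main (though routine) obstacle; the rest is a direct trace.
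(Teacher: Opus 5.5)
Your proposal is correct and follows essentially the paper's route: the paper traces (\ref{3.6}) over a horizontal orthonormal frame and then discards the $T$-, $A$-, $N$- and $\mathcal{V}\nabla\xi$-terms, while you discard them pointwise first, obtaining $\operatorname{Ric}'=\mu g'$ as in the preceding Einstein theorem, and then trace. Your treatment of the bracket terms for a non-basic horizontal $X$, splitting $[\xi,X]$ into its vertical and horizontal parts, is slightly more careful than the paper's, which tacitly writes $T_{[\xi,X]}\xi$.
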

	\begin{proof}
			The total space $M$ of Riemannian submersion $\pi$ admits a hyperbolic Ricci soliton  with the vertical potential field $\xi$, therefore by tracing  \eqref{3.6} for any $X,Y\in\Gamma(\mathcal{H})$ we have 
	
		\begin{align}\label{314}
			\frac{1}{2}\sum_{k=1}^{n}\Big[ \Big[g(\nabla_\xi \nabla_{X_k} \xi,X_k)+g(\nabla_\xi \nabla_{X_k} \xi,X_k)-g(\nabla_{[\xi,X_k]} \xi,X_k) 
			-g(\nabla_{[\xi,X_k]} \xi,X_k)\\+2g(\nabla_{X_k} \xi,\nabla_{X_k} \xi)\Big]\notag+\lambda\Big[g(\nabla_{X_k} \xi,X_k)+g(X_k,\nabla_{X_k} \xi)\Big]
			+\operatorname{Ric}'(X'_k,X'_k)\circ\pi\\+\notag\frac{1}{2}\Big[g(\nabla_{X_k} N,X_k)+g(\nabla_{X_k} N,X_k)\Big]
			-2\sum_{i=1}^{n}g(A_{X_k} X_i,A_{X_k} X_i)\\-\sum_{j=1}^{r}g(T_{X_j} X_k,T_{X_j} X_k)\Big]=\mu\sum_{k=1}^{n} g(X_k,X_k),\notag
		\end{align}
		where $\{X_k\}$ is an orthonormal frame on horizontal distribution and $X'_k$  are $\pi$-related to $X_k$ , respectively. Now, using (\ref{2.1}), (\ref{2.2}) and (\ref{2.3}) in (\ref{314}) and get
		\begin{align*}
			\frac{1}{2}\sum_{k=1}^{n}\Big[\Big[ g(\mathcal{H}\nabla_\xi(A_{X_k} \xi),X_k)+g(T_\xi(\mathcal{V}\nabla_{X_k} \xi),X_k)+g(\mathcal{H}\nabla_\xi(A_{X_k} \xi),X_k)+g(T_\xi(\mathcal{V}\nabla_{X_k} \xi),X_k) \\ -g(T_{[\xi,X_k]}\xi,X_k) - g(T_{[\xi,X_k]}\xi,X_k)+2g(A_{X_k} \xi,A_{X_k} \xi)+2g(\mathcal{V}\nabla_{X_k} \xi,\mathcal{V}\nabla_{X_k} \xi)\Big] \\+\lambda\Big[g(A_{X_k} \xi,X_k)+g(X_k,A_{X_k} \xi)\Big]
			+\operatorname{Ric}'(X_k',X_k')\circ\pi+\frac{1}{2}\Big[g(\nabla_{X_k} N,X_k)\\+g(\nabla_{X_k} N,X_k)\Big] 
			-2\sum_{i=1}^{n}g(A_{X_k} X_i,A_{X_k} X_i)-\sum_{j=1}^{r}g(T_{X_j} X_k,T_{X_j} X_k)\Big]=\mu n.
		\end{align*}
		Since fundamental tensors vanish identically on the manifold and $\nabla \xi$ is horizontal, the preceding equation yields 	${\tau'}=\mu n$, which completes the proof.
			\end{proof}
			
	\section{Riemannian submersion with an Almost Hyperbolic Ricci Soliton}
	In this section, we study a Riemannian submersion whose toal manifold admits an almost hyperbolic Ricci soliton with torse-forming potential vector field, and explore the geometric features.
	A vector field ${\zeta}$ on a Riemannian manifold $M$ is said to be torse-forming \cite{ky}, if it satisfies
	\begin{equation}\label{4.1}
		\nabla_E {\zeta}= fE+\phi(E){\zeta}
	\end{equation}
	where $E\in\Gamma(TM)$, $f$ is a function, $\phi$ is a $1$-form and $\nabla$ is the Levi-Civita connection on $M$.
	The vector field is called
	\begin{enumerate}
		\item [(i)] recurrent if $f$ is zero,
		\item [(ii)]concircular if $\phi$ is zero,
		\item[(iii)] concurrent if $\phi$ is zero and $f=1$ is satisfied.
	\end{enumerate}
	A non-flat Riemannian manifold $(M,g)$ is called an mixed generalized quasi-Einstein manifold \cite{am} if its Ricci tensor satisfies
	\begin{equation}\label{4.2}
		\operatorname{Ric}=ag+b\phi\otimes\phi+c\psi\otimes\psi+d(\phi\otimes\psi+\psi\otimes\phi)
	\end{equation}
	for some smooth functions $a,b,c,d$ and $1$-forms $\phi$ and $\psi$. In particular, the manifold $M$ is called
	\begin{enumerate}
		\item [(i)] \cite{mc} generalized quasi-Einstein if $b=0$ or $c=0$,
		\item [(ii)] \cite{mc1} quasi-Einstein if $b=d=0$ or $c=d=0$,
		\item [(iii)] \cite{al} Einstein if $b=c=d=0$.
	\end{enumerate}
	\begin{proposition}\cite{em1}
		Let ${\zeta}$ be a torse-forming  vector field having tangential part $\xi$ and horizontal part $\eta$ on the total manifold $(M,g)$. Then for any vectors $U\in\Gamma(\mathcal{V})$ and $X\in\Gamma(\mathcal{H})$
		\begin{align}\label{4.3}
			\widehat{\nabla}_U \xi &= fU + \phi(U)\xi- T_U \eta,
		\end{align}
		\begin{align} \label{4.4}
			\nabla_X^{\ast} \eta &= fX + \phi(X)\eta- A_X \xi.
		\end{align}
	\end{proposition}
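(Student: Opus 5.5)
We decompose the torse-forming field as $\zeta=\xi+\eta$, with $\xi=\mathcal{V}\zeta$ and $\eta=\mathcal{H}\zeta$. For arbitrary $E$ we then write \eqref{4.1} as $\nabla_E\xi+\nabla_E\eta=fE+\phi(E)\xi+\phi(E)\eta$. The identities \eqref{4.3} and \eqref{4.4} are obtained by specializing $E$ and projecting this equation onto the appropriate distribution.

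For \eqref{4.3}, take $E=U\in\Gamma(\mathcal{V})$. By \eqref{2.1}, $\nabla_U\xi=T_U\xi+\widehat{\nabla}_U\xi$, where $T_U\xi$ is horizontal. By \eqref{2.2}, $\nabla_U\eta=\mathcal{H}\nabla_U\eta+T_U\eta$, where $T_U\eta$ is vertical. We apply $\mathcal{V}$ to the equation. The left side becomes $\widehat{\nabla}_U\xi+T_U\eta$, and the right side becomes $fU+\phi(U)\xi$, since $U$ and $\xi$ are vertical and $\eta$ is horizontal. Rearranging gives $\widehat{\nabla}_U\xi=fU+\phi(U)\xi-T_U\eta$, which is \eqref{4.3}.

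For \eqref{4.4}, take $E=X\in\Gamma(\mathcal{H})$. By \eqref{2.3}, $\nabla_X\xi=A_X\xi+\mathcal{V}\nabla_X\xi$, where $A_X\xi$ is horizontal. By \eqref{2.4}, $\nabla_X\eta=\nabla^{\ast}_X\eta+A_X\eta$, where $A_X\eta$ is vertical. We apply $\mathcal{H}$ to the equation. The left side becomes $A_X\xi+\nabla^{\ast}_X\eta$, and the right side becomes $fX+\phi(X)\eta$. Rearranging yields \eqref{4.4}.

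The only point needing care is bookkeeping: we must confirm that $T_U$ and $A_X$ interchange $\mathcal{V}$ and $\mathcal{H}$, which is stated in the preliminaries, so that each term lands in the correct component. As byproducts, the complementary projections give $\mathcal{H}\nabla_U\eta+T_U\xi=\phi(U)\eta$ and $\mathcal{V}\nabla_X\xi+A_X\eta=\phi(X)\xi$. These are not needed for the statement but may be useful later.
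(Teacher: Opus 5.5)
Your argument is correct, and it is the standard derivation: you split $\zeta=\xi+\eta$, expand $\nabla_U\zeta$ and $\nabla_X\zeta$ using \eqref{2.1}--\eqref{2.4}, and take the vertical, respectively horizontal, component, which is how these identities are obtained in \cite{em1} (the paper quotes them without proof). Your two complementary projections, $\mathcal{H}\nabla_U\eta=\phi(U)\eta-T_U\xi$ and $\mathcal{V}\nabla_X\xi=\phi(X)\xi-A_X\eta$, are also correct; they are the remaining components of the same computation.
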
	
	\begin{theorem}
		Let $\pi:(M,g,\xi,\lambda,\mu)\rightarrow (B,g')$ be a Riemannian submersion between Riemannian manifolds. The total manifold admits an almost hyperbolic Ricci soliton with the concircular vertical potential field $\xi$ and vanishing fundamental tensors, then any fiber of $\pi$ is an Einstein manifold.
	\end{theorem}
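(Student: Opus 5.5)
The plan is to restrict the almost hyperbolic Ricci soliton equation \eqref{2.16} to vertical vector fields $U,W\in\Gamma(\mathcal{V})$, exactly as in \eqref{3.2}, but now with $\lambda,\mu$ smooth functions. Then I will use the concircularity of $\xi$ and the vanishing of $T$ and $A$ to show that every term except $\widehat{\operatorname{Ric}}(U,W)$ is a multiple of $g(U,W)$.

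First, since $\xi$ is vertical and $T\equiv 0$, the proposition of \cite{em1}, namely \eqref{4.3} with $\eta=0$ and $\phi=0$, gives $\widehat{\nabla}_U\xi=fU$. On the total space we also have $\nabla_E\xi=fE$ for all $E$, because $\xi$ is concircular. Hence
\[
(L_\xi g)(U,W)=2fg(U,W).
\]
Next, for the second Lie derivative I will not expand \eqref{2.18} term by term. Instead I will use the identity
\[
L_\xi L_\xi g = L_\xi(2fg)=2(\xi f)g+2fL_\xi g=2(\xi f)g+4f^2g .
\]
This is valid on all of $M$, so restricting it to $U,W$ gives
\[
\tfrac12(L_\xi L_\xi g)(U,W)=\big(\xi(f)+2f^2\big)g(U,W).
\]
As a consistency check, I could also evaluate \eqref{2.18} directly. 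Using $\nabla_\xi\nabla_U\xi=\xi(f)U+f\nabla_\xi U$ and $\nabla_{[\xi,U]}\xi=f[\xi,U]$, the combination $f(\nabla_\xi U-[\xi,U])=f\nabla_U\xi=f^2U$ produces the same result.

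Then I will apply \eqref{2.12}. With $T\equiv0$ and $A\equiv0$, the terms $g(T_UW,N)$, $(\nabla_{X_i}T)_UW$ and $g(A_{X_i}U,A_{X_i}W)$ all vanish, so $\operatorname{Ric}(U,W)=\widehat{\operatorname{Ric}}(U,W)$. Substituting everything into the soliton equation yields
\[
\widehat{\operatorname{Ric}}(U,W)=\big(\mu-2\lambda f-2f^2-\xi(f)\big)g(U,W),
\]
so each fiber is Einstein.

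The main obstacle is that the Einstein factor above is a function rather than an obvious constant. I expect to handle this by noting that the definition \eqref{4.2} used in the paper only requires a smooth function $a$. Alternatively, if the fiber dimension satisfies $r\ge3$, Schur's lemma makes the factor constant on each fiber. A secondary point is to justify that $\lambda$, $\mu$, $f$ and $\xi(f)$ restrict to smooth functions on each fiber, which is immediate.
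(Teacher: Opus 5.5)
Your proposal is correct and follows essentially the same route as the paper: you restrict the soliton equation to vertical fields, use $\widehat{\nabla}_U\xi=fU$ from \eqref{4.3} and $\operatorname{Ric}(U,W)=\widehat{\operatorname{Ric}}(U,W)$ from \eqref{2.12}, and obtain the same factor $\mu-2\lambda f-\xi(f)-2f^2$; your identity $L_\xi L_\xi g=2\xi(f)g+4f^2g$ is just a tidier version of the paper's term-by-term expansion of \eqref{2.18}. Incidentally, since you use $\nabla_X\xi=fX$ for horizontal $X$, its horizontal part reads $A_X\xi=fX$, so $A\equiv0$ actually forces $f\equiv0$ and the Einstein factor reduces to $\mu$.
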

	\begin{proof}
		Since $M$ admits an almost hyperbolic Ricci soliton with vertical potential field $\xi$, from (\ref{3.4}) we have 
		\begin{align*}
			\frac{1}{2} \Big[g(\widehat{\nabla}_\xi \widehat{\nabla}_U \xi,W)+g(\widehat{\nabla}_\xi \widehat{\nabla}_W \xi,U)-g(\widehat{\nabla}_{[\xi,U]} \xi,W)
			-g(\widehat{\nabla}_{[\xi,W]} \xi,U)\\+2g(\widehat{\nabla}_U \xi,\widehat{\nabla}_W \xi)\Big]+ \lambda\Big[g(\widehat{\nabla}_U \xi,W)+
			g(U,\widehat{\nabla}_W \xi)\Big]\\+\operatorname{Ric}(U,W)=\mu g(U,W),
		\end{align*} 
		for any $U,W\in\Gamma(\mathcal{V})$. Now using (\ref{2.12}) in the above equation we get
		\begin{align*}
			\frac{1}{2} \Big[g(\widehat{\nabla}_\xi \widehat{\nabla}_U \xi,W)+g(\widehat{\nabla}_\xi \widehat{\nabla}_W \xi,U)-g(\widehat{\nabla}_{[\xi,U]} \xi,W)
			-g(\widehat{\nabla}_{[\xi,W]} \xi,U)\\+2g(\widehat{\nabla}_U \xi,\widehat{\nabla}_W \xi)\Big]+ \lambda\Big[g(\widehat{\nabla}_U \xi,W)+
			g(U,\widehat{\nabla}_W \xi)\Big]+\widehat{\operatorname{Ric}}(U,W)\\ -g(T_U W,N)+ \sum_{i=1}^{n}\Big[g((\nabla_{X_i}T)_U W,X_i) +g(A_{X_i}U,A_{X_i}W)\Big]=\mu g(U,W).
		\end{align*}
		Since  fundamental tensors vanish identically on the manifold and $\xi$ is concircular vector field, from $(\ref{4.3})$ we have 	$\widehat{\nabla}_U \xi= fU$. Thus from the preceding equation we obtain
		\begin{align*}
			\frac{1}{2} \Big[2\xi(f)g(U,W)+fg(\widehat{\nabla}_\xi U,W)+fg(\widehat{\nabla}_\xi W,U)-fg([\xi,U],W) 
			-fg([\xi,W],U)\\+2f^2g(U,W)\Big]+2\lambda fg(U,W)+\widehat{\operatorname{Ric}}(U,W)=\mu g(U,W). 
		\end{align*}
		Further, which can be written as
		\begin{align*}
			\widehat{\operatorname{Ric}}(U,W)=(\mu-2\lambda f-\xi(f)-2f^2) g(U,W), 
		\end{align*}
		Hence any fiber of $\pi$ is an Einstein.
	\end{proof}
	\begin{theorem}
		Let $\pi:(M,g,\eta,\lambda,\mu)\rightarrow (B,g')$ be a Riemannian submersion between Riemannian manifolds. The total manifold admits an almost hyperbolic Ricci soliton with the concircular horizontal potential field $\eta$ and vanishing fundamental tensors, then the base manifold $(B,g')$ is an Einstein manifold.
	\end{theorem}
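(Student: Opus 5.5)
The plan mirrors the proof of the preceding theorem for the fibers, but works on the horizontal distribution and then pushes the identity down to $B$. Take $X,Y\in\Gamma(\mathcal{H})$, which we may choose basic and $\pi$-related to $X',Y'$. Evaluate the almost hyperbolic Ricci soliton equation (\ref{2.16}) for the horizontal potential field $\eta$, with $\lambda,\mu$ now smooth functions, on the pair $(X,Y)$. This reproduces (\ref{3.7}), expanded via (\ref{2.17}) and (\ref{2.18}) exactly as in (\ref{3.8}). Then substitute (\ref{2.13}) for $\operatorname{Ric}(X,Y)$.

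Next use $T\equiv0$ and $A\equiv0$. This removes the $A_\eta A_X\eta$, $A_X\eta$ and $\sum g(A_XX_i,A_YX_i)$ terms and the $T$-terms. It also kills the $N$-terms, since $N=\sum T_{U_j}U_j=0$. What remains is equation (\ref{3.9}), in which only $\nabla^{\ast}$ appears.

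Now invoke the concircular hypothesis. By (\ref{4.4}) with $\phi=0$ and $A=0$ we have $\nabla^{\ast}_X\eta=fX$. Hence
\[ g(\nabla^{\ast}_\eta\nabla^{\ast}_X\eta,Y)=\eta(f)g(X,Y)+fg(\nabla^{\ast}_\eta X,Y), \qquad g(\nabla^{\ast}_{[\eta,X]}\eta,Y)=fg([\eta,X],Y), \]
and $g(\nabla^{\ast}_X\eta,\nabla^{\ast}_Y\eta)=f^2g(X,Y)$. Since $\mathcal{H}$ is parallel (the lemma on vanishing $T,A$), $\mathcal{H}[\eta,X]=\nabla^{\ast}_\eta X-\nabla^{\ast}_X\eta$. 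The torsion-type terms therefore combine to give
\[ f\big(g(\nabla^{\ast}_X\eta,Y)+g(\nabla^{\ast}_Y\eta,X)\big)=2f^2g(X,Y). \]
Collecting everything, (\ref{3.9}) becomes
\[ \operatorname{Ric}'(X',Y')\circ\pi=\big(\mu-2\lambda f-\eta(f)-2f^2\big)g(X,Y), \]
which is the horizontal analogue of the formula for $\widehat{\operatorname{Ric}}$ obtained for the fibers.

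The main obstacle is passing from this identity on $M$ to an Einstein condition on $B$. The left side is constant along fibers, so the coefficient $\mu-2\lambda f-\eta(f)-2f^2$ must also be constant along fibers. For $n\ge2$ it then descends to a function $\rho$ on $B$, giving $\operatorname{Ric}'=\rho\, g'$. It is constant when $n\ge3$ by Schur's lemma. I would make this point explicit, or, as the paper does elsewhere, state the conclusion at the level of $\operatorname{Ric}'=\rho g'$ with $\rho\circ\pi$ given by the displayed coefficient. One should also double-check the bookkeeping of the bracket terms against the sign conventions in (\ref{2.18}), since these are where an error would occur.
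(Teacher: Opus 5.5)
Your proposal is correct and follows the paper's proof essentially line for line. You reduce to (\ref{3.9}) via $T\equiv0$, $A\equiv0$, insert $\nabla^{\ast}_X\eta=fX$, and use $\mathcal{H}[\eta,X]=\nabla^{\ast}_\eta X-\nabla^{\ast}_X\eta$ to reach $\operatorname{Ric}'(X',Y')\circ\pi=(\mu-2\lambda f-\eta(f)-2f^2)g(X,Y)$, which is exactly the paper's identity. Your closing remark is a valid refinement that the paper skips: it calls $(B,g')$ Einstein even though the coefficient is built from the functions $\lambda,\mu,f$, whereas you note that this coefficient descends to a function on $B$ and is constant only by Schur's lemma when $n\ge3$.
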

	\begin{proof}
		Since $M$ admits an almost hyperbolic Ricci soliton with horizontal potential field $\eta$ and fundamental tensors vanish identically on the manifold, from (\ref{3.9}) we have 
		\begin{align*}
			\frac{1}{2} \Big[g(\nabla^{\ast}_\eta \nabla^{\ast}_X \eta,Y)+g(\nabla^{\ast}_\eta \nabla^{\ast}_Y \eta,X)-g(\nabla^{\ast}_{[\eta,X]} \eta,Y)
			-g(\nabla^{\ast}_{[\eta,Y]} \eta,X)\\+2g(\nabla^{\ast}_X \eta,\nabla^{\ast}_Y \eta)\Big]+ \lambda\Big[g(\nabla^{\ast}_X \eta,Y)+
			g(X,\nabla^{\ast}_Y \eta)\Big]\\+\operatorname{Ric}'(X',Y')\circ\pi=\mu g(X,Y).
		\end{align*} 
		Also $\eta$ is concircular vector field, from $(\ref{4.4})$ we have 	$\nabla^{\ast}_X \eta= fX$. Thus from the preceding equation we obtain
		\begin{align*}
			\frac{1}{2} \Big[2\eta(f)g(X,Y)+fg(\nabla^{\ast}_\eta X,Y)+fg(\nabla^{\ast}_\eta Y,X)-fg([\eta,X],Y) 
			-fg([\eta,Y],X)\\ +2f^2g(X,Y)\Big]+2\lambda fg(X,Y)+\operatorname{Ric}'(X',Y')\circ\pi=\mu g(X,Y). 
		\end{align*}
		Further, the above equation can be written as
		\begin{align*}
			\operatorname{Ric}'(X',Y')\circ\pi=(\mu-2\lambda f-\eta(f)-2f^2) g(X,Y), 
		\end{align*}
		which is equivalent to
		\begin{align*}
			\operatorname{Ric}'(X',Y')=(\mu-2\lambda f-\eta(f)-2f^2) g'(X',Y'). 
		\end{align*}
		Hence, $(B,g')$ is an Einstein.
	\end{proof}
	\begin{theorem}
		Let $\pi:(M,g,\xi,\lambda,\mu,\phi)\rightarrow (B,g')$ be a Riemannian submersion between Riemannian manifolds. The total manifold admits an almost hyperbolic Ricci soliton with the torse-forming vertical potential field $\xi$ having parallel $1$-form $\phi$ and totally geodesic fibers, then the extrinsic vertical distribution  is an almost generalized quasi-Einstein (i.e. extrinsic vertical Ricci  tensor $\operatorname{Ric}|_{\mathcal{V}}$ satisties (\ref{4.2})).
	\end{theorem}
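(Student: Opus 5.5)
We will work directly with the soliton equation (\ref{2.16}) restricted to $U,W\in\Gamma(\mathcal{V})$, that is, with (\ref{3.2}), where now $\lambda,\mu$ are smooth functions. The idea is to compute every term of $L_\xi L_\xi g$ and $L_\xi g$ from the torse-forming condition (\ref{4.1}), $\nabla_E\xi=fE+\phi(E)\xi$. This gives $\operatorname{Ric}(U,W)$ explicitly as a combination of $g(U,W)$ and $\phi(U)\phi(W)$, plus mixed terms that reduce to the form (\ref{4.2}) with $\psi=\xi^\flat$, the $1$-form metrically dual to $\xi$. Since the fibers are totally geodesic ($T=0$), (\ref{4.3}) reduces to $\widehat{\nabla}_U\xi=fU+\phi(U)\xi$, consistent with (\ref{4.1}) on vertical arguments.

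First, $(L_\xi g)(U,W)=2fg(U,W)+\phi(U)\psi(W)+\phi(W)\psi(U)$. Next we expand (\ref{2.18}) term by term. We have $\nabla_\xi\nabla_U\xi=\xi(f)U+f\nabla_\xi U+\xi(\phi(U))\xi+\phi(U)(f\xi+\phi(\xi)\xi)$. Because $\phi$ is parallel, $\xi(\phi(U))=\phi(\nabla_\xi U)$. For the bracket we use $[\xi,U]=\nabla_\xi U-\nabla_U\xi$, which gives $\nabla_{[\xi,U]}\xi=f\nabla_\xi U-f^2U-f\phi(U)\xi+\phi(\nabla_\xi U)\xi-\phi(U)\big(f\xi+\phi(\xi)\xi\big)$. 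In the difference $g(\nabla_\xi\nabla_U\xi,W)-g(\nabla_{[\xi,U]}\xi,W)$, the terms involving $\nabla_\xi U$ cancel. What remains is $(\xi(f)+f^2)g(U,W)+(3f+2\phi(\xi))\phi(U)\psi(W)$, up to careful bookkeeping of the coefficients. Finally, $g(\nabla_U\xi,\nabla_W\xi)=f^2g(U,W)+f\big(\phi(U)\psi(W)+\phi(W)\psi(U)\big)+|\xi|^2\phi(U)\phi(W)$.

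Substituting all of this into (\ref{3.2}) and solving for $\operatorname{Ric}(U,W)$, we expect
\[
\operatorname{Ric}|_{\mathcal V}=a\,g+b\,\phi\otimes\phi+d\,(\phi\otimes\psi+\psi\otimes\phi),
\]
with $a=\mu-2\lambda f-\xi(f)-2f^2$, $b=-|\xi|^2$, and $d=-\big(\lambda+\tfrac{5}{2}f+\phi(\xi)\big)$ or a similar expression. The $\psi\otimes\psi$ coefficient is $c=0$, so this is of the form (\ref{4.2}) with $c=0$, the generalized quasi-Einstein case. It is "almost" because the coefficients are functions, as $\lambda$ and $\mu$ are.

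The main obstacle is to confirm that no term of the form $\psi\otimes\psi$ survives, and that every remaining term is symmetric in $U,W$. Terms such as $\xi(\phi(U))$ would spoil the structure unless they cancel, and the parallelism of $\phi$ is exactly what makes them combine and cancel. We will therefore track the asymmetric pieces $\phi(U)\psi(W)$ and $\phi(W)\psi(U)$ separately and check that they pair up after symmetrization in (\ref{2.18}). The totally geodesic hypothesis is used to identify $\nabla$ with $\widehat{\nabla}$ on vertical fields. This lets the resulting tensor be read as the extrinsic vertical Ricci tensor $\operatorname{Ric}|_{\mathcal V}$, with no $T$-terms appearing.
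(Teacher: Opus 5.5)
Your proposal is correct and follows essentially the paper's route. You substitute the torse-forming relation into (\ref{2.17})--(\ref{2.18}) for vertical $U,W$, use $\nabla\phi=0$ to cancel the $\phi(\nabla_\xi U)$ terms, and collect terms; your coefficients $a=\mu-\xi(f)-2f^2-2\lambda f$, $b=-\|\xi\|^2$, $c=0$, $d=-(\tfrac52 f+\phi(\xi)+\lambda)$ agree exactly with the paper's, so the hedge "or a similar expression" is unnecessary. The only difference is bookkeeping: you work with the ambient $\nabla$ and (\ref{4.1}) directly, whereas the paper passes through (\ref{3.4}) and (\ref{4.3}), carries the $T$-terms in (\ref{4.5}), and only then drops them using $T=0$.
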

	\begin{proof}
		Since	$M$ admits an almost hyperbolic Ricci soliton with the vertical potential field $\xi$, for any  $U,W\in\Gamma(\mathcal{V})$ from (\ref{3.4}) we have 
		\begin{align*}
			\frac{1}{2} \Big[g(\widehat{\nabla}_\xi \widehat{\nabla}_U \xi,W)+g(\widehat{\nabla}_\xi \widehat{\nabla}_W \xi,U)-g(\widehat{\nabla}_{[\xi,U]} \xi,W)
			-g(\widehat{\nabla}_{[\xi,W]} \xi,U)\\+2g(\widehat{\nabla}_U \xi,\widehat{\nabla}_W \xi)\Big]+ \lambda\Big[g(\widehat{\nabla}_U \xi,W)+ g(U,\widehat{\nabla}_W \xi)\Big]\\+\operatorname{Ric}(U,W)=\mu g(U,W).
		\end{align*}
		Now $\xi$ is torse-forming vector field, using $(\ref{4.3})$ in the above equation we obtain
		\begin{align*}
			\frac{1}{2}\Big[2\xi(f)g(U,W)+2\phi (U)\phi(\xi)g(\xi,W) +\phi (U)g(\eta,T_\xi W)
			-g((\widehat{\nabla}_\xi T)_U \eta,W)\\ +2\phi (W)\phi(\xi)g(\xi,U)+g((\widehat{\nabla}_\xi \phi)(U) \xi,W)
			+ \phi (W)g(\eta,T_\xi U)-g((\widehat{\nabla}_\xi T)_W \eta,U)\\
			-\phi(T_U \eta)g(\xi,W)-g(T_{W}\eta,\widehat{\nabla}_U \xi) -\phi(T_W \eta)g(\xi,U)-g(T_{U}\eta,\widehat{\nabla}_W \xi)
			\\	+4f^2g(U,W)+5f\phi(W)g(U,\xi)+g((\widehat{\nabla}_\xi \phi)(W) \xi,U)
			+4fg(T_U W, \eta) \\+5f\phi(U)g(\xi,W)
			+2\phi(U)\phi(W)||\xi||^2+2\phi(U)g(T_W \xi,\eta)+2fg(\eta,T_U W)\\
			+2\phi(W)g(\eta,T_U \xi) +2g(T_U \eta,T_W \eta)\Big]+2\lambda fg(U,W)+\lambda\phi(U)g(\xi,W)\\
			+2\lambda g(\eta,T_U W) +\lambda\phi(W)g(\xi,U)+\operatorname{Ric}(U,W)=\mu g(U,W). 
		\end{align*}
		Also $\phi$ is parallel along vertical potential field (i.e. $\nabla\phi=0$), the above equation becomes
		\begin{align}\label{4.5}
			\Big[\frac{5}{2}f+\phi(\xi)+\lambda\Big]\Big[\phi (U)g(\xi,W)+\phi(W)g(\xi,U)\Big] -\frac{3}{2}\Big[\phi (U)g(T_\xi \eta,W)\\\notag + \phi (W)g(T_\xi \eta,U)\Big]
			+g((\widehat{\nabla}_\xi T)_U W,\eta) -\frac{1}{2}\Big[\phi(T_U \eta)g(\xi,W)+\phi(T_W \eta)g(\xi,U)\\\notag +g(T_{W}\eta,\widehat{\nabla}_U \xi)+g(T_{U}\eta,\widehat{\nabla}_W \xi)\Big]
			+\phi(U)\phi(W)||\xi||^2 +g(T_U \eta,T_W \eta)+\\(3f+2\lambda )g(\eta,T_U W)+\operatorname{Ric}(U,W)
			=(\mu-\xi(f)-2f^2-2\lambda f)g(U,W).\notag 
		\end{align}
		The total manifold is totally geodesic, therfore from the preceding equation we get
		\begin{align*}
			\Big[\frac{5}{2}f+\phi(\xi)+\lambda\Big]\Big[\phi (U)g(\xi,W)+\phi(W)g(\xi,U)\Big]
			+||\xi||^2\phi(U)\phi(W)\\ +\operatorname{Ric}(U,W)=(\mu-\xi(f)-2f^2-2\lambda f)g(U,W).\notag 
		\end{align*}
		Let the dual $1$-form of $\xi$ be $\alpha$, then $\alpha(U)=g(\xi,U)$. Thus we obtain
		\begin{align*}
			\operatorname{Ric}(U,W)|_{\mathcal{V}}=(\mu-\xi(f)-2f^2-2\lambda f)g(U,W)-||\xi||^2\phi(U)\phi(W)\\ -\Big[\frac{5}{2}f+\phi(\xi)+\lambda\Big]\Big[\phi (U)\alpha(W)+\phi(W)\alpha(U)\Big] , 
		\end{align*}
		which is equivalent to
		\begin{align*}
			\operatorname{Ric}|_{\mathcal{V}}=(\mu-\xi(f)-2f^2-2\lambda f)g -||\xi||^2\phi\otimes\phi \\-\Big[\frac{5}{2}f+\phi(\xi)+\lambda\Big]\Big[\phi\otimes\alpha+\alpha\otimes\phi\Big].
		\end{align*}
		Thus  the extrinsic vertical distribution is an almost generalized quasi-Einstein.
	\end{proof}
	\begin{theorem}
		Let $\pi:(M,g,\xi,\lambda,\mu,\phi)\rightarrow (B,g')$ be a Riemannian submersion between Riemannian manifolds. The total manifold admits an almost hyperbolic Ricci soliton with the torse-forming vertical potential field $\xi$ having parallel $1$-form $\phi$ and vanishing fundamental tensors, then the any fiber is an almost generalized quasi-Einstein.
	\end{theorem}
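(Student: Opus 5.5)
The plan is to start from the computation already made in the proof of the preceding theorem, namely equation (\ref{4.5}). That equation is the almost hyperbolic Ricci soliton identity restricted to vertical $U,W$, written out using the torse-forming relation (\ref{4.3}) and the parallelism $\nabla\phi=0$. In the present statement both fundamental tensors vanish, $T\equiv0$ and $A\equiv0$. Hence every term in (\ref{4.5}) involving $T$ disappears: the terms $g(T_\xi\eta,\cdot)$, $(\widehat\nabla_\xi T)$, $\phi(T_U\eta)$, $g(T_W\eta,\widehat\nabla_U\xi)$, $g(T_U\eta,T_W\eta)$ and $g(\eta,T_UW)$ all drop out. Exactly as in the previous theorem, this leaves
\begin{align*}
\operatorname{Ric}(U,W)=(\mu-\xi(f)-2f^2-2\lambda f)g(U,W)-\|\xi\|^2\phi(U)\phi(W)\\
-\Big[\tfrac52 f+\phi(\xi)+\lambda\Big]\big[\phi(U)\alpha(W)+\phi(W)\alpha(U)\big],
\end{align*}
where $\alpha$ is the dual $1$-form of $\xi$.

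The next step is to pass from the extrinsic Ricci tensor to the intrinsic Ricci tensor $\widehat{\operatorname{Ric}}$ of the fibers, using (\ref{2.12}). With $T\equiv0$ we have $N=0$, so $g(T_UW,N)=0$ and $(\nabla_{X_i}T)_UW=0$. With $A\equiv0$ the term $g(A_{X_i}U,A_{X_i}W)$ also vanishes. Therefore $\operatorname{Ric}(U,W)=\widehat{\operatorname{Ric}}(U,W)$ for all vertical $U,W$, and substituting into the display above gives
\[
\widehat{\operatorname{Ric}}=a\,\hat g+b\,\phi\otimes\phi+d(\phi\otimes\alpha+\alpha\otimes\phi),
\]
restricted to the fiber. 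The coefficients are $a=\mu-\xi(f)-2f^2-2\lambda f$, $b=-\|\xi\|^2$, $c=0$ and $d=-(\tfrac52 f+\phi(\xi)+\lambda)$. Here $\alpha$ and $\phi$ are restricted to vertical vectors, and $\alpha$ is the dual form of $\xi$ with respect to the induced metric, since $\xi$ is vertical. This is (\ref{4.2}) with $c=0$ and smooth (non-constant) coefficient functions, which is what ``almost generalized quasi-Einstein'' means for each fiber.

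I expect the main obstacle to be justifying that the intermediate identity (\ref{4.5}) is legitimate here. In particular, (\ref{3.4}) (the expression of the soliton equation through $\widehat\nabla$) was derived under the assumption that the $A$-terms coming from (\ref{2.2}) and (\ref{2.3}) vanish, and that is guaranteed now because $A\equiv0$. One also has to check that, with $\nabla\phi=0$, the second-derivative terms $g(\widehat\nabla_\xi\widehat\nabla_U\xi,W)-g(\widehat\nabla_{[\xi,U]}\xi,W)$ collapse to $\xi(f)g(U,W)$ plus $\phi$-terms, via $\widehat\nabla_U\xi=fU+\phi(U)\xi$. This is a short direct computation, so I would simply redo it with $T=0$ from the outset rather than rely on the coefficients displayed in (\ref{4.5}).
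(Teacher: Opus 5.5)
Your proposal is correct and is essentially the paper's proof: the paper also sets $T\equiv0$ and $A\equiv0$ in (\ref{4.5}) and reads off $\widehat{\operatorname{Ric}}$ with the same coefficients as yours, $a=\mu-\xi(f)-2f^2-2\lambda f$, $b=-\|\xi\|^2$, $d=-(\frac{5}{2}f+\phi(\xi)+\lambda)$; the paper leaves the identification $\operatorname{Ric}(U,W)=\widehat{\operatorname{Ric}}(U,W)$ via (\ref{2.12}) implicit, and you spell it out. Two minor corrections: passing from $\nabla$ to $\widehat{\nabla}$ for vertical $U,W,\xi$ needs only (\ref{2.1}) and $T\equiv0$, not the $A$-terms from (\ref{2.2})--(\ref{2.3}), since $A\equiv0$ is used only in (\ref{2.12}); and the collapsed second-derivative terms also contribute an $f^2g(U,W)$ term, which your final formula does include.
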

	\begin{proof}
		Since  fundamental tensors vanish identically on the manifold, from equation (\ref{4.5}) we get 
		\begin{align*}
			\widehat{\operatorname{Ric}}(U,W)=(\mu-\xi(f)-2f^2-2\lambda f)g(U,W)-||\xi||^2\phi(U)\phi(W)\\ -\Big[\frac{5}{2}f+\phi(\xi)+\lambda\Big]\Big[\phi (U)\alpha(W)+\phi(W)\alpha(U)\Big] , 
		\end{align*}
		for any $U,W\in\Gamma(\mathcal{V})$,	which implies
		\begin{align*}
			\widehat{\operatorname{Ric}}=(\mu-\xi(f)-2f^2-2\lambda f)g -||\xi||^2\phi\otimes\phi \\-\Big[\frac{5}{2}f+\phi(\xi)+\lambda\Big]\Big[\phi\otimes\alpha+\alpha\otimes\phi\Big].
		\end{align*}
		Hence any fiber is an almost generalized quasi-Einstein.
	\end{proof}
	\begin{theorem}
		Let $\pi:(M,g,\eta,\lambda,\mu,\phi)\rightarrow (B,g')$ be a Riemannian submersion between Riemannian manifolds. The total manifold admits an almost hyperbolic Ricci soliton with the torse-forming horizontal potential field $\eta$ having parallel $1$-form $\phi$ and integrable horizontal distribution, then the  extrinsic horizontal distribution is an almost generalized quasi-Einstein (i.e. extrinsic horizontal Ricci tensor $\operatorname{Ric}|_{\mathcal{H}}$ satisties (\ref{4.2})).
	\end{theorem}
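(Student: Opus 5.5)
The plan mirrors the proof of the vertical analogue just given, now with $X,Y\in\Gamma(\mathcal{H})$ and the horizontal potential field $\eta$. I start from the almost hyperbolic soliton equation restricted to horizontal arguments, i.e. (\ref{3.7}) with $\lambda,\mu$ now smooth functions. Expanding with (\ref{2.17}) and (\ref{2.18}) and splitting by (\ref{2.3}) and (\ref{2.4}) gives (\ref{3.8}). Since $\mathcal{H}$ is integrable we have $A\equiv0$. Then all terms $g(A_\eta A_X\eta,Y)$, $g(A_X\eta,A_Y\eta)$ and the vertical parts of $\nabla_X\eta$ drop out, and $\mathcal{H}\nabla_{[\eta,X]}\eta=\nabla^\ast_{[\eta,X]}\eta$. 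This leaves the left-hand side of (\ref{3.9}) with $\operatorname{Ric}(X,Y)$ kept in place of $\operatorname{Ric}'\circ\pi$. This is the point: we describe the extrinsic tensor $\operatorname{Ric}|_{\mathcal{H}}$, so we do not invoke (\ref{2.13}).

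Next I substitute the torse-forming relation (\ref{4.4}). With $A=0$ it reads $\nabla^\ast_X\eta=fX+\phi(X)\eta$. I then compute each term of the second Lie derivative, namely
\[
\nabla^\ast_\eta\nabla^\ast_X\eta=\eta(f)X+f\nabla^\ast_\eta X+\eta(\phi(X))\eta+\phi(X)\bigl(f\eta+\phi(\eta)\eta\bigr).
\]
Next, $\nabla^\ast_{[\eta,X]}\eta=f[\eta,X]^{\mathcal{H}}+\phi([\eta,X])\eta$, and
\[
g(\nabla^\ast_X\eta,\nabla^\ast_Y\eta)=f^2g(X,Y)+2f\phi(X)\phi(Y)\ldots
\]
More precisely it equals $f^2g(X,Y)+f\phi(X)\beta(Y)+f\phi(Y)\beta(X)+\|\eta\|^2\phi(X)\phi(Y)$, where $\beta(X)=g(\eta,X)$ is the dual $1$-form of $\eta$. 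The terms $f g(\nabla^\ast_\eta X,Y)-fg([\eta,X],Y)$ combine, via torsion-freeness, into $f g(\nabla^\ast_X\eta,Y)=f^2g(X,Y)+f\phi(X)\beta(Y)$. The derivative $\eta(\phi(X))-\phi([\eta,X])$ combines into $(\nabla_\eta\phi)(X)+\phi(\nabla_X\eta)$. Parallelism of $\phi$ kills the first piece, and the second gives $f\phi(X)+\phi(X)\phi(\eta)$.

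Collecting, I expect an identity of the same shape as (\ref{4.5}):
\[
\operatorname{Ric}(X,Y)|_{\mathcal{H}}=\bigl(\mu-\eta(f)-2f^2-2\lambda f\bigr)g(X,Y)-\|\eta\|^2\phi(X)\phi(Y)-\Bigl[\tfrac{5}{2}f+\phi(\eta)+\lambda\Bigr]\bigl(\phi(X)\beta(Y)+\phi(Y)\beta(X)\bigr).
\]
This is exactly (\ref{4.2}) with $a=\mu-\eta(f)-2f^2-2\lambda f$, $b=-\|\eta\|^2$, $c=0$, $d=-(\tfrac52 f+\phi(\eta)+\lambda)$ and $\psi=\beta$. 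Since $a,b,d$ are functions (because $\lambda,\mu$ are), $\operatorname{Ric}|_{\mathcal{H}}$ is almost generalized quasi-Einstein.

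The main obstacle is the bookkeeping of the exact numerical coefficients of the mixed terms $\phi\otimes\beta+\beta\otimes\phi$, together with making sure the Lie bracket terms cancel correctly. This needs care because $[\eta,X]$ may have a vertical component; its horizontal part is what enters once $A=0$. I would verify the coefficients by redoing the computation symmetrically in $X,Y$ and matching against the vertical case. Only the form of (\ref{4.2}) matters for the conclusion, so a coefficient discrepancy would not affect the statement.
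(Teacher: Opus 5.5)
Your proposal is correct and is essentially the paper's argument. The only difference is the order of simplification: the paper substitutes (\ref{4.4}) into (\ref{3.8}) with the $A$-terms still present, uses $\nabla\phi=0$ to reach (\ref{4.6}), and only then drops the $A$-terms by integrability, whereas you set $A\equiv0$ first. Your pieces do assemble to exactly the paper's coefficients $a=\mu-\eta(f)-2f^2-2\lambda f$, $b=-\|\eta\|^2$, $d=-\bigl(\tfrac52 f+\phi(\eta)+\lambda\bigr)$. Your worry about the bracket is moot: $\mathcal{V}[\eta,X]=2A_\eta X=0$ when $A\equiv0$, so $[\eta,X]$ is already horizontal.
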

	\begin{proof}
		Since $M$ admits an almost hyperbolic Ricci soliton with the horizontal potential field $\eta$,  for any $X,Y\in\Gamma(\mathcal{H})$ from (\ref{3.8}) we have  
		\begin{align*}
			\frac{1}{2} \Big[g(\nabla^{\ast}_\eta \nabla^{\ast}_X \eta,Y)+g(\nabla^{\ast}_\eta \nabla^{\ast}_Y \eta,X)-g(\mathcal{H}\nabla_{[\eta,X]} \eta,Y)
			-g(\mathcal{H}\nabla_{[\eta,Y]} \eta,X)\\ \notag+2g(\nabla^{\ast}_X \eta,\nabla^{\ast}_Y \eta)
			+4g(A_X \eta,A_Y \eta)\Big]+ \lambda\Big[g(\nabla^{\ast}_X \eta,Y)+ g(X,\nabla^{\ast}_Y \eta)\Big]\\+\operatorname{Ric}(X,Y)=\mu g(X,Y)\notag. 
		\end{align*}
		Now $\eta$ is torse-forming vector field, using (\ref{4.4}) in the above equation we obtain
		\begin{align*}
			\frac{1}{2}\Big[2\eta(f)g(X,Y)+2\phi (X)\phi(\eta)g(\eta,Y) +\phi (X)g(\xi,A_\eta Y)
			-g((\nabla^{\ast}_\eta A)_X \xi,Y)\\+2\phi (Y)\phi(\eta)g(\eta,X)+g((\nabla^{\ast}_\eta \phi)(X) \eta,Y)+ \phi (Y)g(\xi,A_\eta X)-g((\nabla^{\ast}_\eta A)_Y \xi,X)\\-\phi(A_X \xi)g(\eta,Y) +g(A_{Y}\xi,\nabla^{\ast}_X \eta) -2\phi(A_\eta X)g(\eta,Y)-2\phi(A_\eta Y)g(\eta,X)-\\\phi(A_Y \xi)g(\eta,X)+g(A_{X}\xi,\nabla^{\ast}_Y \eta) +4f^2g(X,Y)+5f\phi(Y)g(X,\eta)\\+g((\nabla^{\ast}_\eta \phi)(Y) \eta,X)+4fg(A_X Y, \xi) +5f\phi(X)g(\eta,Y)+2\phi(X)\phi(Y)||\eta||^2\\
			+2\phi(X)g(A_Y \eta,\xi)+2fg(\xi,A_X Y)
			+2\phi(Y)g(\xi,A_X \eta) +2g(A_X \xi,A_Y \xi)\\+4g(A_X \eta,A_Y \eta)\Big]
			+2\lambda fg(X,Y)+\lambda\phi(X)g(\eta,Y)
			+2\lambda g(\xi,A_X Y)\\ +\lambda\phi(Y)g(\eta,X)+\operatorname{Ric}(X,Y)=\mu g(X,Y). 
		\end{align*}
		Also $\phi$ is parallel along horizontal potential field (i.e. $\nabla\phi=0$), from the above equation we get
		\begin{align}\label{4.6}
			\Big[\frac{5}{2}f+\phi(\eta)+\lambda\Big]\Big[\phi (X)g(\eta,Y)+\phi(Y)g(\eta,X)\Big]-\frac{3}{2}\Big[\phi (X)g(A_\eta \xi,Y)\\ \notag+\phi (Y)g(A_\eta \xi,X)\Big] -\frac{1}{2} \Big[\phi(A_X \xi)g(\eta,Y)+\phi(A_Y \xi)g(\eta,X) -g(A_{Y}\xi,\nabla^{\ast}_X \eta)\\\notag-g(A_{X}\xi,\nabla^{\ast}_Y \eta)\Big]
			+\phi(X)\phi(Y)||\eta||^2+2g(A_X \eta,A_Y \eta) +g(A_X \xi,A_Y \xi)\\+(3f+2\lambda )g(\xi,A_X Y)+\operatorname{Ric}(X,Y)
			=(\mu-\eta(f)-2f^2-2\lambda f)g(X,Y). \notag
		\end{align}
		The horizontal distribution $\mathcal{H}$ is integrable, therefore from preceding equation we deduce
		\begin{align*}
			\Big[\frac{5}{2}f+\phi(\eta)+\lambda\Big]\Big[\phi (X)g(\eta,Y)+\phi(Y)g(\eta,X)\Big]
			+||\eta||^2\phi(X)\phi(Y) \\+\operatorname{Ric}(X,Y)=(\mu-\eta(f)-2f^2-2\lambda f)g(X,Y).\notag 
		\end{align*}
		Let the dual $1$-form of $\eta$ be $\beta$, i.e.  $\beta(X)=g(\eta,X)$. Thus
		\begin{align*}
			\operatorname{Ric}(X,Y)|_{\mathcal{H}}=(\mu-\eta(f)-2f^2-2\lambda f)g(X,Y)-||\eta||^2\phi(X)\phi(Y)\\ -\Big[\frac{5}{2}f+\phi(\eta)+\lambda\Big]\Big[\phi (X)\beta(Y)+\phi(Y)\beta(X)\Big], 
		\end{align*}
		which is equivalent to
		\begin{align*}
			\operatorname{Ric}|_{\mathcal{H}}=(\mu-\eta(f)-2f^2-2\lambda f)g -||\eta||^2\phi\otimes\phi \\-\Big[\frac{5}{2}f+\phi(\eta)+\lambda\Big]\Big[\phi\otimes\beta+\beta\otimes\phi\Big].
		\end{align*}
		Thus the  extrinsic horizontal distribution
		is an almost generalized quasi-Einstein.
	\end{proof}
	
	\begin{theorem}
		Let $\pi:(M,g,\eta,\lambda,\mu,\phi)\rightarrow (B,g')$ be a Riemannian submersion between Riemannian manifolds. The total manifold admits an almost hyperbolic Ricci soliton with the torse-forming horizontal potential field $\eta$ having parallel $1$-form $\phi$ and vanishing fundamental tensors, then the base manifold is an almost generalized quasi-Einstein.  
	\end{theorem}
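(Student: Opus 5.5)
The plan is to mirror the vertical case, i.e. the theorem deriving the fiber statement from (\ref{4.5}), now on the horizontal side. The starting point is equation (\ref{4.6}), already obtained in the proof of the previous theorem. It holds for any $X,Y\in\Gamma(\mathcal{H})$ whenever $M$ carries an almost hyperbolic Ricci soliton with torse-forming horizontal potential field $\eta$ and the $1$-form $\phi$ is parallel. The only extra input is that now both $T\equiv0$ and $A\equiv0$, rather than only $A\equiv0$.

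First I set $A\equiv0$ in (\ref{4.6}). This kills the terms $\phi(X)g(A_\eta\xi,Y)$, $\phi(A_X\xi)g(\eta,Y)$, $g(A_Y\xi,\nabla^{\ast}_X\eta)$, $g(A_X\eta,A_Y\eta)$, $g(A_X\xi,A_Y\xi)$ and $g(\xi,A_XY)$, together with their symmetric counterparts. What survives is
\begin{align*}
\Big[\frac{5}{2}f+\phi(\eta)+\lambda\Big]\Big[\phi(X)\beta(Y)+\phi(Y)\beta(X)\Big]+\|\eta\|^2\phi(X)\phi(Y)+\operatorname{Ric}(X,Y)=(\mu-\eta(f)-2f^2-2\lambda f)g(X,Y),
\end{align*}
where $\beta$ is the dual $1$-form of $\eta$.

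Next I replace $\operatorname{Ric}(X,Y)$ using (\ref{2.13}). With $T\equiv0$ we have $N=0$ and $T_{U_j}X=0$, and with $A\equiv0$ the term $\sum_i g(A_XX_i,A_YX_i)$ vanishes. So (\ref{2.13}) reduces to $\operatorname{Ric}(X,Y)=\operatorname{Ric}'(X',Y')\circ\pi$ for basic $X,Y$ that are $\pi$-related to $X',Y'$. Substituting this, I solve for $\operatorname{Ric}'$. Using $g(X,Y)=g'(X',Y')\circ\pi$, I obtain on $B$
\begin{align*}
\operatorname{Ric}'=(\mu-\eta(f)-2f^2-2\lambda f)g'-\|\eta\|^2\phi'\otimes\phi'-\Big[\frac{5}{2}f+\phi(\eta)+\lambda\Big]\big(\phi'\otimes\beta'+\beta'\otimes\phi'\big).
\end{align*}
Here $\beta'$ and $\phi'$ are the $1$-forms on $B$ induced by $\beta|_{\mathcal{H}}$ and $\phi|_{\mathcal{H}}$. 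This has the shape (\ref{4.2}) with $c=0$, so $B$ is an almost generalized quasi-Einstein manifold, the word "almost" reflecting that $\lambda,\mu$ are functions.

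The main obstacle is the descent to $B$: the coefficients $f$, $\eta(f)$, $\lambda$, $\mu$, $\|\eta\|^2$, $\phi(\eta)$ and the forms $\phi|_{\mathcal{H}}$, $\beta|_{\mathcal{H}}$ must be constant along the fibers (basic) for the identity to define tensors on $B$. For $\beta$, I would take $\eta$ basic, with $\eta'$ its $\pi$-related field, as in the theorem where $B$ inherits a hyperbolic Ricci soliton. Then $\beta(X)=g'(\eta',X')\circ\pi$ is basic. For the remaining functions and for $\phi$, I would argue as in the Einstein theorem for concircular horizontal $\eta$, where the factor $\mu-2\lambda f-\eta(f)-2f^2$ is tacitly pushed down. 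Concretely, I would read the displayed identity as a pointwise relation whose left side $\operatorname{Ric}'\circ\pi$ is fiber-constant, and interpret the right side as the pull-back of the corresponding expression on $B$, assuming $\phi|_{\mathcal{H}}$ is basic. Failing a cleaner argument, I would state explicitly that the coefficient functions are taken to be basic.
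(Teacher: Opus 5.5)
Your proposal is correct and follows the paper's own proof: the paper likewise drops the $A$- and $T$-terms in (\ref{4.6}), uses (\ref{2.13}) to replace $\operatorname{Ric}(X,Y)$ by $\operatorname{Ric}'(X',Y')\circ\pi$, and reads off $\operatorname{Ric}'$ in the form (\ref{4.2}) with $\psi=\beta'$ and $c=0$. The paper does not address the descent to $B$ that you flag; it simply posits $1$-forms $\phi',\beta'$ with $\phi(X)=\phi'(X')\circ\pi$, $\beta(X)=\beta'(X')\circ\pi$ and treats the coefficients as functions on $B$, so stating the basicness explicitly (or evaluating the pointwise identity along a local section of $\pi$) is more careful than the original.
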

	\begin{proof}
		Since  fundamental tensors vanish identically on the manifold, from (\ref{4.6}) we get 
		\begin{align*}
			\Big[\frac{5}{2}f+\phi(\eta)+\lambda\Big]\Big[\phi (X)g(\eta,Y)+\phi(Y)g(\eta,X)\Big]
			+||\eta||^2\phi(X)\phi(Y) \\+\operatorname{Ric}'(X',Y')\circ\pi=(\mu-\eta(f)-2f^2-2\lambda f)g(X,Y). 
		\end{align*}
		Let the dual $1$-form of $\eta$ be $\beta$, i.e. $\beta(X)=g(\eta,X)$. Thus
		\begin{align*}
			\operatorname{Ric}'(X',Y')\circ\pi=(\mu-\eta(f)-2f^2-2\lambda f)g(X,Y)-||\eta||^2\phi(X)\phi(Y)\\ -\Big[\frac{5}{2}f+\phi(\eta)+\lambda\Big]\Big[\phi (X)\beta(Y)+\phi(Y)\beta(X)\Big]. 
		\end{align*}
		Since the horizontal vector fields $X,Y$ are $\pi$ related to $X',Y'$ on $M'$ respectively, the preceding equation yields
		\begin{align*}
			\operatorname{Ric}'=(\mu-\eta(f)-2f^2-2\lambda f)g' -||\eta||^2\phi'\otimes\phi' \\-\Big[\frac{5}{2}f+\phi(\eta)+\lambda\Big]\Big[\phi'\otimes\beta'+\beta'\otimes\phi'\Big],
		\end{align*}
		where $\phi'$ and $\beta'$ are $1$-forms on $M'$ such that $\phi(X)=\phi'(X')\circ\pi$ and 
		$\beta(X)=\beta'(X')\circ\pi$.
		Hence the base manifold is an almost generalized quasi-Einstein.  
	\end{proof}
	\section{Examples}
	In this section we prove the existence of a hyperbolic Ricci soliton and an almost hyperbolic Ricci soliton on Riemannian submersion by constructing some examples.
	\begin{example}
		We consider the manifold $M=\{(x_1,x_2,x_3)\in R^3, x_3 \neq0\}$ equipped with Riemannian metric $g=\frac{1}{x_3^2}\Big\{(dx_1)^2+(dx_2)^2+(dx_3)^2\Big\}$ and the manifold $B=\{(y_1,y_2)\in R^2, y_2 \neq0\}$ endowed with the Riemannian metric $g'=\frac{1}{y_2^2}\Big\{(dy_1)^2+(dy_2)^2\Big\}$.
		
		Define a submersion $\pi:(M,g)\rightarrow(B,g')$ by
		\begin{align*}
			\pi(x_1,x_2,x_3)= \left(\frac{x_1-x_2}{\sqrt{2}},x_3\right).
		\end{align*}
		Then we obtain 
		\begin{align*}
			Ker\pi_{\ast}&= span\left\{ v_1=\frac{\partial}{\partial x_1}+\frac{\partial}{\partial x_2}\right\},\\
			(Ker\pi_{\ast})^{\bot}&= span\left\{ u_1=\frac{\partial}{\partial x_1}-\frac{\partial}{\partial x_2},u_2=\frac{\partial}{\partial x_3}\right\},
		\end{align*}
		Since $\pi_{\ast}$ preserves the metric of horizontal space
		(i.e.	$g(u_i,u_i)=g'(\pi_{\ast}u_i,\pi_{\ast}u_i)$; $i=1,2$), thus $\pi$ is a Riemannian submersion. Now we show that the total space $M$ admits a hyperbolic Ricci soliton. The vector fields are
		\begin{align*}
			e_1=x_3\frac{\partial}{\partial_{x_1}},\ \ e_2=x_3\frac{\partial}{\partial_{x_2}},\ \ e_3=x_3\frac{\partial}{\partial_{x_3}},
		\end{align*}
		and the Riemannian metric $g$ as
		\begin{align*}
			g(e_i,e_j) =
			\begin{cases}
				1, & i=j,\\
				0, & otherwise.
			\end{cases}
		\end{align*}
		
		Now, using the Koszul's formula the Levi-civita connection $\nabla$ is obtained and 
		by using the definition of Riemannian curvature, we obtain the Ricci tensor 
		\[\operatorname{Ric}(e_i,e_j)=\left[
		\begin{matrix}
			-2 & 0 & 0\\
			0 & -2 & 0\\
			0 & 0 & -2
		\end{matrix}
		\right]\]
		Let the potential vector field ${\zeta}=e_3$, then the Lie-derivative of vector fields as
		\[L_{\zeta} g(e_i,e_j)=\left[
		\begin{matrix}
			-2 & 0 & 0\\
			0 & -2 & 0\\
			0 & 0 & 0
		\end{matrix}
		\right]\]
		and the second Lie-derivative is calculated as
		\[L_{\zeta} L_{\zeta} g(e_i,e_j)=\left[
		\begin{matrix}
			4 & 0 & 0\\
			0 & 4 & 0\\
			0 & 0 & 0
		\end{matrix}
		\right]\]
		Thus $(M,g)$ admits a hyperbolic Ricci soliton with potential vector field $\zeta=e_3$ and the Riemannian metric $g$ is an expanding hyperbolic Ricci soliton with soliton constant $\lambda = 1$ and shrinking rate of change of soliton $\mu =-2$.
	\end{example}
	
	\begin{example}
		We consider the manifold $M=\{(x_1,x_2,x_3,x_4,x_5)\in R^5, x_5 \neq0\}$ equipped with Riemannian metric $g=\frac{1}{x_5^4}\Big\{(dx_1)^2+(dx_2)^2+(dx_3)^2+(dx_4)^2+(dx_5)^2\Big\}$ and the manifold $B=\{(y_1,y_2)\in R^2, y_2 \neq0\}$ endowed with the Riemannian metric $g'=\frac{1}{y_2^4}\Big\{(dy_1)^2+(dy_2)^2\Big\}$.
		
		Define a submersion $\pi:(M,g)\rightarrow(B,g')$  by
		\begin{align*}
			\pi(x_1,x_2,x_3,x_4,x_5)=\left(\frac{x_1+x_2}{\sqrt{2}},x_5\right)
		\end{align*}
		Then we obtain
		\begin{align*}
			Ker\pi_{\ast}&= span\left\{ v_1=\frac{\partial}{\partial x_1}-\frac{\partial}{\partial x_2}, v_2=\frac{\partial}{\partial x_3}, v_3=\frac{\partial}{\partial x_4}\right\},\\
			(Ker\pi_{\ast})^{\bot}&= span\left\{ u_1=\frac{\partial}{\partial x_1}+\frac{\partial}{\partial x_2},u_2=\frac{\partial}{\partial x_5}\right\},
		\end{align*}
		Since  $\pi_{\ast}$ preserves the metric of horizontal space 
		(i.e.	$g(u_i,u_i)=g'(\pi_{\ast}u_i,\pi_{\ast}u_i)$; $i=1,2$), thus $\pi$ is a Riemannian submersion. Now we show that the total space $M$ admits an almost hyperbolic Ricci soliton. The vector fields are
		\begin{align*}
			e_1=x_5^2\frac{\partial}{\partial_{x_1}},\ \ e_2=x_5^2\frac{\partial}{\partial_{x_2}},\ \ 
			e_3=x_5^2\frac{\partial}{\partial_{x_3}},\ \
			e_4=x_5^2\frac{\partial}{\partial_{x_4}},\ \
			e_5=x_5^2\frac{\partial}{\partial_{x_5}},
		\end{align*}
		and the Riemannian metric $g$ as
		\begin{align*}
			g(e_i,e_j) =
			\begin{cases}
				1, & i=j,\\
				0, & otherwise.
			\end{cases}
		\end{align*}
		
		Now, using Koszul's formula the Levi-civita connection $\nabla$ is obtained and by using the definition of Riemannian curvature, we obtain the Ricci tensor 
		\[\operatorname{Ric}(e_i,e_j)=\left[
		\begin{matrix}
			-16x_5^2 & 0 & 0 & 0 & 0\\
			0 & -16x_5^2 & 0 & 0 & 0\\
			0 & 0 & -16x_5^2 & 0 & 0\\
			0 & 0 & 0 & -16x_5^2 & 0\\
			0 & 0 & 0 & 0 &-16x_5^2
		\end{matrix}
		\right]\]
		Let the potential vector field $\zeta=e_5$, then the Lie-derivative of vector fields as
		\[L_{\zeta} g(e_i,e_j)=\left[
		\begin{matrix}
			-4x_5 & 0 & 0 & 0 & 0\\
			0 & -4x_5 & 0 & 0 & 0\\
			0 & 0 & -4x_5 & 0 & 0\\
			0 & 0 & 0 & -4x_5 & 0\\
			0 & 0 & 0 & 0 & 0
		\end{matrix}
		\right]\]
		and the second Lie-derivative is calculated as
		\[L_{\zeta} L_{\zeta} g(e_i,e_j)=\left[
		\begin{matrix}
			12x_5^2 & 0 & 0 & 0 & 0\\
			0 & 12x_5^2 & 0 & 0 & 0\\
			0 & 0 & 12x_5^2 & 0 & 0\\
			0 & 0 & 0 & 12x_5^2 & 0\\
			0 & 0 & 0 & 0 & 0
		\end{matrix}
		\right]\]
		Thus $(M,g)$ admits an almost hyperbolic Ricci soliton with potential vector field $\zeta=e_5$ and the Riemannian metric $g$ is  an expanding almost hyperbolic Ricci soliton with soliton constant $\lambda =\frac{3}{2}x_5$ and shrinking rate of change of soliton $\mu =-16x_5^2$.
	\end{example}
	\section{Conclusion}
	After a comprehensive study, we conclude that if a Riemannian submersion has a total manifold admitting a hyperbolic Ricci soliton, then under suitable conditions the fibers and the base manifold inherit hyperbolic or almost hyperbolic Ricci soliton structures. Moreover, if the total space admits an almost hyperbolic Ricci soliton with a torse-forming vector field, then both the fibers and the base manifold become almost generalized quasi-Einstein manifolds.
	
	Riemannian submersions admitting hyperbolic-type Ricci solitons are important in geometry and mathematical physics, particularly in understanding Type III singularities, long-time behavior of geometric flows, negatively curved manifolds, and stability of hyperbolic metrics. They also arise in self-similar solutions of the Einstein equations, expanding cosmological models, asymptotically hyperbolic metrics, AdS geometry, entropy functionals, nonlinear PDE analysis, and stability theory.
	
	This study can be further extended to other classes of submersions, such as semi-invariant, slant, semi-slant, conformal, and Clairaut submersions, as well as to maps defined on manifolds with different differential structures and solitons.
	
	\textbf {Acknowledgement} : Akhilesh Yadav gratefully acknowledges the financial support provided by BHU, Ref. No. R/Dev/D/IoE/Incentive (Phase IV)/2024-25/82487. Sonali Jaiswal acknowledges the financial assistance provided by the University Grants Commission (U.G.C.), India, in support of her research work.
	
	\textbf {Data Availability Statement} : Data sharing is not applicable to this article as no datasets were generated or analysed during the study.
	
	\textbf {Conflict of Interest} : The authors declare that they have no conflict of interest.

	\noindent\author{Sonali Jaiswal}\\
	\date{Department of Mathematics, Institute of Science, \\Banaras Hindu University, Varanasi-221005, India}\\
	\maketitle {\noindent E-mail: jais.sona94@gmail.com}
	
	\noindent\author{Akhilesh Yadav}\\
	\date{Department of Mathematics, Institute of Science, \\Banaras Hindu University, Varanasi-221005, India}\\
	\maketitle {\noindent E-mail: akhilesha68@gmail.com}
	
	\noindent\author{S.S. Shukla}\\
	\date{Department of Mathematics, Institute of Science, \\University of Allahabad, Allahabad-211002, India}\\
	\maketitle {\noindent E-mail: ssshukla\_au@rediffmail.com}

\end{document}